\theoremstyle{plain}
\newtheorem{theorem}{Theorem}[section]
\newtheorem{proposition}[theorem]{Proposition}
\newtheorem{lemma}[theorem]{Lemma}
\newtheorem{corollary}[theorem]{Corollary}
\numberwithin{equation}{section}
\theoremstyle{definition}
\newtheorem{example}[theorem]{Example}
\newtheorem{problem}[theorem]{Problem}
\newtheorem{remark}[theorem]{Remark}
\newtheorem{algorithm}[theorem]{Algorithm}
\newcommand{\C}{\mathbb{C}}
\newcommand{\R}{\mathbb{R}}
\newcommand{\Z}{\mathbb{Z}}
\newcommand{\cP}{\mathcal{P}}
\newcommand{\cJ}{\mathcal{J}}
\newcommand{\cF}{\mathcal{F}}
\newcommand{\cR}{\mathcal{R}}
\newcommand{\cX}{\mathcal{X}}
\newcommand{\cM}{\mathcal{M}}
\newcommand{\cZ}{\mathcal{Z}}
\newcommand{\cA}{\mathcal{A}}
\DeclareMathOperator{\lk}{Lk}
\DeclareMathOperator{\wed}{Wed}
\DeclareMathOperator{\proj}{Proj}
\DeclareMathOperator{\susp}{\Sigma}
\DeclareMathOperator{\Hom}{Hom}
\title[Toric wedge induction and toric lifting property]{Toric wedge induction and toric lifting property for piecewise linear spheres with a few vertices}
\author{Suyoung Choi}
\address{Department of mathematics, Ajou University, 206, World cup-ro, Yeongtong-gu, Suwon 16499,  Republic of Korea}
\email{schoi@ajou.ac.kr}
\author{Hyeontae Jang}
\address{Department of mathematics, Ajou University, 206, World cup-ro, Yeongtong-gu, Suwon 16499, Republic of Korea}
\email{a24325@ajou.ac.kr}
\author{Mathieu Vall\'ee}
\address{Université Sorbonne Paris Nord, LIPN, CNRS UMR 7030, F-93430, Villetaneuse, France}
\email{vallee@lipn.fr}
\date{\today}
\subjclass[2020]{57S12, 14M25, 52B05}
\keywords{PL~sphere, lifting problem, toric manifold, real toric manifold, Buchstaber number, real Buchstaber number, binary matroid}
\thanks{This work was supported by the National Research Foundation of Korea Grant funded by the Korean Government (NRF-2021R1A6A1A10044950).}
\begin{document}
\begin{abstract}
    Let $K$ be an $(n-1)$-dimensional piecewise linear sphere on $[m]$, where $m\leq n+4$.
    There are a canonical action of $m$-dimensional torus $T^m$ on the moment-angle complex $\cZ_K$, and a canonical action of $\Z_2^m$ on the real moment-angle complex $\R\cZ_K$, where $\Z_2$ is the additive group with two elements.
    We prove that any subgroup of $\Z_2^m$ acting freely on $\R\cZ_K$ is induced by a subtorus of $T^m$ acting freely on $\cZ_K$.
    The proof primarily utilizes a suitably modified method of toric wedge induction and the combinatorial structure of a specific binary matroid of rank~$4$.
\end{abstract}
\maketitle

\tableofcontents
\section{Introduction}
Let $K$ be a simplicial complex on the set $[m]=\{1, \ldots, m\}$.
We define the \emph{polyhedral product} $ (\underline{X}, \underline{Y})^K $ of $K$ with respect to a pair $(X, Y)$ of topological spaces as follows:
$$
    (\underline{X}, \underline{Y})^K \coloneqq \bigcup_{\sigma \in K} \left\{ (x_1, \ldots, x_m) \in X^m \mid x_i \in Y \text{ when } i \notin \sigma \right\}.
$$
Here, $D^d$ represents the $d$-dimensional disk, defined as $D^d = \{\mathbf{x} \in \R^d \mid \|\mathbf{x}\| \leq 1\}$, and $S^{d-1}$ denotes its boundary sphere of dimension $d-1$.
The \emph{moment-angle complex} $\cZ_K$ of $K$ is then defined as $(\underline{D^2}, \underline{S^1})^K$, and the \emph{real moment-angle complex} $\R\cZ_K$ of $K$ is $(\underline{D^1}, \underline{S^0})^K$.
We observe that the $T^1$-action on the pair $(D^2, S^1)$ leads to the canonical action of the $m$-dimensional torus $T^m = (S^1)^m$ on $\cZ_K$.
Additionally, there is an $S^0$-action on the pair $(D^1, S^0)$.
For clarity and consistency in our terminology throughout this paper, we treat $S^0$ as the additive group $\Z_2=\Z/2\Z$ with two elements $\{ 0, 1\}$.
This, then, yields the canonical $\Z_2^m$-action on $\R\cZ_K$.

It is noteworthy that when an $r$-dimensional subtorus $H$ of $T^m$ acts freely on $\cZ_K$, the resulting quotient space $\cZ_K /H$ admits a well-behaved torus action $T^m/H \cong T^{m-r}$ with an orbit space that exhibits a reverse face structure isomorphic to $K$.
Such spaces are commonly referred to as \emph{toric spaces} or \emph{(partial) quotients}, and are fundamental in the study of \emph{toric topology}.
Consequently, understanding which subtori $H$ of $T^m$ can act freely on $\cZ_K$ is of significant importance.
The Buchstaber number $s(K)$ is the maximal integer $r$ for which there exists a subtorus of rank $r$ acting freely on $\cZ_K$.
Similarly, taking a subgroup $H$ of $\Z_2^m$ freely acting on $\R\cZ_K$ yields the quotient space $\R\cZ_K / H$ which is referred to as a \emph{real toric space} or a \emph{real (partial) quotient}.
The real Buchstaber number $s_\R(K)$ is similarly defined by the existence of a subgroup acting freely on $\R\cZ_K$.
The determination of (real) Buchstaber numbers is challenging.
We refer to the following publications for details:
\cite{BP2002}, \cite{Fukukawa2011}, \cite{Erokhovets2014}, \cite{Ayzenberg2016}, and \cite{Shen2023}.

It is known that the real moment-angle complex $\R\cZ_K$ is the fixed point set by the involution on $\cZ_K$ induced by the complex conjugation on $D^2 \subset \C$.
This implies that a $T^m$-action on $\cZ_K$ induces a $\Z_2^m$-action on $\R\cZ_K$, and then $d$-dimensional subtorus of $T^m$ acting freely on $\cZ_K$ induces a rank $d$ subgroup of $\Z_2^m$ acting freely on $\R\cZ_K$.
Thus, we obtain the inequality $s(K) \leq s_\R(K)$, and Ayzenberg \cite{Ayzenberg2011} noted that the equality does not generally hold; specifically, there exists a simplicial complex whose real Buchstaber number is strictly bigger than its Buchstaber number.

From now on, we zero in on the case when $K$ is a PL~sphere, since in this case, all toric spaces over \( K \) are PL~manifolds  \cite{Cai2017}.
If $K$ is $(n-1)$-dimensional, we have the inequalities $s(K)\leq s_\R(K) \leq m-n$.
Given the condition $s(K) = m-n$, which is a special case often encountered in various fields of mathematics, the manifold $\cZ_K / H$ for a maximal subtorus $H \subset T^m$ freely acting on $\cZ_K$ is termed a \emph{topological toric manifold} \cite{Ishida-Fukukawa-Masuda2013} when $K$ is star-shaped.
If $K$ is polytopal, the manifold is referred to as a \emph{quasitoric manifold} \cite{Davis-Januszkiewicz1991}\footnote{A quasitoric manifold was originally called a toric manifold in \cite{Davis-Januszkiewicz1991}, and was renamed in \cite{BP2002} to avoid confusion with a smooth compact toric variety.}.
Similarly, given the condition $s_\R(K)=m-n$, the manifold $\R\cZ_K / H$ for a maximal subgroup $H$ freely acting on $\R\cZ_K$ is called a \emph{real topological toric manifold} when $K$ is star-shaped, and it is called a \emph{small cover} when $K$ is polytopal.
These are real analogs of topological toric and quasitoric manifolds, respectively.

In the class of PL~spheres, no examples have been known where  $s(K) < s_\R(K)$.
In light of this observation, one may ask whether $s(K)=s_\R(K)$ for a PL~sphere $K$, and the following stronger question can be considered.

\begin{problem} \label{problem}
    Let $K$ be a PL~sphere on $[m]$.
    Given a subgroup of $\Z_2^m$ acting freely on $\R\cZ_K$, is this action induced by a subtorus of $T^m$ freely acting on $\cZ_K$?
\end{problem}

In particular, when $s_\R(K) = m-n$, Problem~\ref{problem} is equivalent to the (toric) \emph{lifting problem} (Problem~\ref{equiv_problem}).
In other words, this asks whether every small cover (or real topological toric manifold) is induced from some quasitoric manifold (or topological toric manifold, respectively).
The lifting problem was initially proposed by Zhi L\"{u} at the toric topology conference held in Osaka in 2011, as documented in \cite{Choi-Park2016}, and remains an open problem in toric topology, attracting considerable research attention.
However, significant advances in resolving this problem have been elusive.
This paper aims to make a contribution by providing meaningful results to the lifting problem, and more broadly to Problem~\ref{problem}, in the case $m \leq n+4$.

\begin{restatable}{theorem}{main} \label{thm:main}
     Let $K$ be an $(n-1)$-dimensional PL~sphere with $m \leq n+4$ vertices.
     Then, any subgroup of $\Z_2^m$ freely acting on $\R\cZ_K$ is induced by a subtorus of $T^m$ freely acting on $\cZ_K$.
\end{restatable}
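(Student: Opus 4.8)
The plan is to reduce the theorem to a finite combinatorial verification by the method of \emph{toric wedge induction}. First, I would recall that free subgroup actions of $\Z_2^m$ on $\R\cZ_K$ correspond to \emph{mod-2 characteristic maps} on $K$, i.e. $\Z_2$-matrices $\Lambda$ satisfying the nonsingularity condition on every maximal simplex, and likewise free subtorus actions on $\cZ_K$ correspond to integral characteristic maps. Thus Problem~\ref{problem} for a fixed $K$ becomes: every mod-2 characteristic map $\Lambda$ over $K$ lifts to an integral characteristic map $\widetilde{\Lambda}$ reducing to $\Lambda$ mod $2$. This is a statement about the pair $(K,\Lambda)$, and the wedge construction gives a way to bootstrap: if $K = \wed_v(K')$ is obtained from $K'$ by wedging at a vertex $v$, there is a correspondence between characteristic maps over $K'$ and those over $K$ that is compatible with the passage between the real and rational settings, so the lifting property for all $K'$ on at most $m-1$ vertices (with the appropriate codimension) yields it for the non-seed $K$'s on $m$ vertices.

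The second step is to set up the induction on $m-n$ for $m\le n+4$, which amounts to establishing the base cases $m-n\le 3$ and then handling $m-n=4$. For $m - n\le 3$ one can invoke the known classification of PL~spheres with few vertices (Mani, and the Picard-number-$\le 3$ classification used throughout toric topology) together with existing results on the lifting problem in those ranges; these should already be in the literature or follow by a direct check of the finitely many combinatorial types. For $m-n=4$, the set of \emph{seeds} — PL~spheres that are not themselves a wedge — is finite, and I would enumerate them (this is where the rank-$4$ binary matroid in the abstract enters: the "non-wedge" condition and the structure of the vertex–supporting data is governed by a specific rank-$4$ binary matroid, whose flats/circuits organize the enumeration). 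For each seed $K$ one must show every mod-$2$ characteristic map lifts; since there are finitely many seeds and, over each, finitely many mod-$2$ characteristic maps up to the symmetry group $\mathrm{GL}(n,\Z_2)\times\mathrm{Aut}(K)$, this is a finite check.

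The third step is the inductive step itself: given a non-seed $(n-1)$-sphere $K$ on $m=n+4$ vertices, write $K=\wed_v(K')$ where $K'$ is an $(n'-1)$-sphere on $m-1 = n'+4$ vertices (so the Picard number is preserved), assume the lifting property for $K'$, and transport a mod-$2$ characteristic map over $K$ to one over $K'$, lift it integrally by the inductive hypothesis, and push the integral lift back along the wedge correspondence, checking that integrality and the nonsingularity conditions are preserved and that the resulting integral map reduces mod $2$ to the original. The compatibility of the wedge operation with reduction mod $2$ is the technical heart here and must be verified carefully, but it is essentially linear-algebraic.

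The main obstacle I expect is the finite base-case verification for the seeds with $m-n=4$: one must (i) produce a provably complete list of seeds, which requires controlling which $(n-1)$-spheres on $n+4$ vertices fail to be wedges — this is exactly where the combinatorics of the rank-$4$ binary matroid must be exploited to keep the list finite and explicit — and (ii) certify the lifting property for each mod-$2$ characteristic map over each seed, which, although a finite problem, is large and needs organizing principles (matroid circuits to detect when a $\Z_2$-nonsingular minor can be realized over $\Z$, and the symmetry group to cut down the enumeration). Everything else — the correspondence between group actions and characteristic maps, the wedge bootstrap, the reduction to seeds — is structural and should go through by adapting the standard toric wedge induction framework to track the mod-$2$ vs. integral distinction.
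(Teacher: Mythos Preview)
Your proposal has the right overall shape but contains a genuine gap in the inductive step, and misidentifies both the handling of small rank and the role of the binary matroid.

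First, the correspondence you invoke is only valid at maximal rank: a freely acting subgroup of $\Z_2^m$ of rank $r$ corresponds to a mod~$2$ characteristic map over $K$ only when $r = m-n$. For $r < m-n$ one works instead with the $m \times r$ matrix $S$ of Proposition~\ref{proposition:free action}, and the paper disposes of all cases $r \le 3$ (for \emph{any} $m-n$) by an elementary determinant bound: the $\{0,1\}$-lift of $S$ works because a $\{0,1\}$-matrix of size $\le 3$ has determinant of absolute value at most $2$, so an odd determinant is automatically $\pm 1$. No classification of PL~spheres is needed here. Your decomposition by $m-n$ rather than by $r$ obscures this, and your appeal to ``known classification\ldots\ together with existing results'' for $m-n\le 3$ is both unnecessary and does not cover the rank-$\le 3$ subgroups in the $m-n=4$ case.

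Second, and more seriously, your inductive step does not go through as stated. A mod~$2$ characteristic map $\Lambda^\R$ over $\wed_v(K')$ does not transport to \emph{one} map over $K'$ but to an ordered \emph{pair} $(\lambda_1^\R,\lambda_2^\R)$ of projections, and $\Lambda^\R = \lambda_1^\R \wedge_v \lambda_2^\R$ requires that $\lambda_1^\R$ and $\lambda_2^\R$ share a common block (the matrix $A$ in \eqref{eq:wedged char}). If you lift $\lambda_1^\R$ and $\lambda_2^\R$ separately over $\Z$, there is no reason the two integral lifts agree on that shared block, so they cannot be recombined into a lift of $\Lambda^\R$. This is precisely why the paper introduces the \emph{modified} toric wedge induction: the inductive step is restricted to canonical extensions ($\lambda_1^\R = \lambda_2^\R$), where the compatibility is automatic, at the cost of enlarging the basis step from seeds to all $K(J)$ carrying a quasi-injective dual characteristic map (still a finite set).

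Finally, the rank-$4$ binary matroid is not used to enumerate seeds---that enumeration is quoted from \cite{Choi-Jang-Vallee2023}. The matroid argument (Theorem~\ref{theorem: 168}) is instead the key lifting mechanism in the basis step: among the $840$ bases of $\Z_2^4$ in $\{0,1\}^4$, exactly five have integral determinant $\pm 3$, and these five form a single orbit-block under $GL(4,\Z_2)$; a pigeonhole over the $168$ such blocks shows that whenever $\overline{K}$ has fewer than $168$ facets, some $GL(4,\Z_2)$-translate of the $\{0,1\}$-lift avoids all determinant-$\pm 3$ configurations and hence is an honest integral lift. The remaining finitely many cases with $\ge 168$ facets are handled by computer.
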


Let $H$ be a subgroup of $\Z_2^m$  of rank $r$, freely acting on $\R\cZ_K$.
Then, $0 \leq r \leq \min(s_R(K),m-n) \leq 4$.
To address this question, we categorize our approach into two distinct cases: the case where $r \leq 3$ and the case where $r=s_\R(K)= m-n=4$.
In Section~\ref{sec:m-l<=3}, we give a positive answer to Problem~\ref{problem} under the condition that for $r \leq 3$ without the necessity of $m=n+4$.
It demonstrates that Theorem~\ref{thm:main} holds for the case where $m \leq n+3$, or the case  $m=n+4$ and $r \leq 3$.

Subsequent sections will focus on the case $r = 4$ under the specific condition $m-n=s_\R(K)=4$.
Here, we establish the theorem for this case employing a method we call \emph{toric wedge induction}.
This method was firstly introduced by Choi and Park in \cite{Choi-Park2016}, and it can be effectively used to demonstrate properties of toric varieties for certain Picard numbers. 
In our proof, we will introduce a more powerful version of toric wedge induction. 
Additionally, the combinatorial structure of a binary matroid of rank~$4$ will be crucially used for the basis step of the induction.

\section{The case $r \leq 3$} \label{sec:m-l<=3}
Let $K$ be an $(n-1)$-dimensional simplicial complex on $[m]=\{1, 2, \ldots, m\}$, $H \subset T^m$ a subtorus of dimension $r \leq m-n$.
After choosing a basis, it can be written as
\begin{equation}\label{free_action}
    H = \{(e^{2 \pi i (s_{11} \phi_1 + \dots + s_{1r} \phi_r)}, \ldots, e^{2 \pi i (s_{m1} \phi_1 + \dots + s_{mr} \phi_r)}) \in T^m \mid \phi_j \in \R, j=1, ..., r\},
\end{equation}
where $s_{ij} \in \Z$.
We define an $m \times r$ integer matrix $S=(s_{ij})$.
Additionally, the $(m-n) \times r$ matrix $S_{\hat{i}_1, \ldots, \hat{i}_n}$ is defined as the submatrix of $S$ obtained by excluding the rows corresponding to entries $i_j$ for $j=1, \ldots,n$.
The following proposition was proved for polytopal simplicial complexes in \cite{BP2002}, but it can be also proved by a similar argument for general ones.

\begin{proposition} \label{proposition:free action}
    Let $K$ be a simplicial complex. Then the subtorus \eqref{free_action} acts freely on $\cZ_K$ if and only if for any facet $\{i_1, \ldots, i_n\}$ of $K$, the matrix $S_{\hat{i}_1, \ldots, \hat{i}_n}$ defined above gives a monomorphism $\Z^r \longrightarrow \Z^{m-n}$ to a direct summand.
\end{proposition}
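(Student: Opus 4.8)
The plan is to analyze when the $T^m$-action restricted to the subtorus $H$ of \eqref{free_action} is free, by reducing to the local structure near each point of $\cZ_K$ and testing freeness orbit-by-orbit. Recall that a point $x = (x_1,\ldots,x_m) \in \cZ_K$ lies in the component indexed by the maximal simplex containing $\{i : x_i \notin S^1\}$; the stabilizer of $x$ under $T^m$ is the coordinate subtorus $T^\sigma = \{t \in T^m : t_i = 1 \text{ for } i \notin \sigma\}$, where $\sigma = \{i : x_i = 0\}$ is a face of $K$. Thus $H$ acts freely on $\cZ_K$ if and only if for every face $\sigma \in K$ we have $H \cap T^\sigma = \{1\}$. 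Since it suffices to check this on facets $\sigma = \{i_1,\ldots,i_n\}$ (every face is contained in one, and $T^{\sigma'} \subset T^\sigma$ when $\sigma' \subset \sigma$), freeness is equivalent to $H \cap T^{\{i_1,\ldots,i_n\}} = \{1\}$ for every facet.

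Next I would translate the condition $H \cap T^{\{i_1,\ldots,i_n\}} = \{1\}$ into the matrix language. A point of $H$ is determined by $(\phi_1,\ldots,\phi_r) \in \R^r$, and it lies in $T^{\{i_1,\ldots,i_n\}}$ precisely when its coordinates indexed by $[m] \setminus \{i_1,\ldots,i_n\}$ are all trivial, i.e.\ when $S_{\hat{i}_1,\ldots,\hat{i}_n}\,\boldsymbol{\phi} \in \Z^{m-n}$. So $H \cap T^{\{i_1,\ldots,i_n\}} = \{1\}$ says exactly that the composite
\[
    \R^r \xrightarrow{\ S_{\hat{i}_1,\ldots,\hat{i}_n}\ } \R^{m-n} \longrightarrow \R^{m-n}/\Z^{m-n} = T^{m-n}
\]
has trivial kernel, where we identify $T^{\{i_1,\ldots,i_n\}}$ with $T^{m-n}$ via the deleted coordinates. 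A standard fact about tori (the dual statement to: a homomorphism $\Z^r \to \Z^{m-n}$ induces an injection of tori iff it is a split monomorphism of abelian groups, i.e.\ its cokernel is torsion-free) now finishes the equivalence: the induced map $T^r \to T^{m-n}$ is injective if and only if $S_{\hat{i}_1,\ldots,\hat{i}_n} \colon \Z^r \to \Z^{m-n}$ is a monomorphism onto a direct summand — equivalently, its Smith normal form has all elementary divisors equal to $1$, equivalently the $r \times r$ minors of $S_{\hat{i}_1,\ldots,\hat{i}_n}$ are setwise coprime.

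I expect the only genuinely substantive point to be this last linear-algebra lemma about when a matrix of integers induces an injective homomorphism of tori; the rest is bookkeeping about polyhedral products and coordinate subtori. One must be slightly careful that the kernel one computes over $\R$ (a connected subgroup) together with the torsion issues over $\Z$ combine correctly: injectivity of $T^r \to T^{m-n}$ fails either because of a positive-dimensional kernel (rank of $S_{\hat{i}_1,\ldots,\hat{i}_n}$ less than $r$) or because of a finite nontrivial kernel (rank $r$ but a nonunit elementary divisor), and both are subsumed by the single condition ``monomorphism to a direct summand.'' Finally, since the polyhedral product model here differs from the polytopal setting of \cite{BP2002} only in that $K$ need not be polytopal — and the stabilizer computation above used only the combinatorics of faces of $K$, not any embedding — the argument goes through verbatim, which is the ``similar argument'' alluded to before the statement.
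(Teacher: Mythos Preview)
Your argument is correct and is exactly the standard proof from \cite{BP2002} that the paper invokes without reproducing (noting, as you do, that only the face combinatorics of $K$ enters the stabilizer computation, not polytopality). One wording slip: the composite $\R^r \to T^{m-n}$ cannot literally have trivial kernel since $\Z^r$ always lies in it --- what you mean, and correctly state in the next sentence, is that the induced map $T^r \to T^{m-n}$ is injective (implicitly using that $S$ itself is a split monomorphism, so that $\R^r/\Z^r \cong H$).
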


The latter condition is equivalent to $S_{\hat{i}_1, \ldots, \hat{i}_n}$ having an $r \times r$ submatrix whose determinant is $\pm 1$.
A similar argument holds for $\Z_2^m$-action on $\R\cZ_K$.
Then the mod~$2$ reduction of the matrix $S$ representing a freely acting subtorus of $T^m$ on $\cZ_K$ represents a freely acting subgroup of $\Z_2^m$ on $\R \cZ_K$.

\begin{theorem} \label{thm: r3}
    Let $K$ be an $(n-1)$-dimensional simplicial complex on $[m]$, and $r \leq 3$ a non-negative integer.
    Then any rank $r$ subgroup of $\Z_2^m$ acting freely on $\R\mathcal{Z}_K$ is induced by an $r$-dimensional subtorus of $T^m$ freely acting on $\cZ_K$.
\end{theorem}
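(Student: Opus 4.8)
The plan is to reduce the statement to a lifting problem for integer matrices: given an $m\times r$ matrix $\bar S$ over $\Z_2$ whose mod-$2$ reductions of maximal minors satisfy the freeness criterion of Proposition~\ref{proposition:free action} on every facet of $K$, we must produce an integer matrix $S$ reducing to $\bar S$ mod~$2$ such that for every facet $\{i_1,\dots,i_n\}$ of $K$ the submatrix $S_{\hat i_1,\dots,\hat i_n}$ has a unit $r\times r$ minor. The mod-$2$ hypothesis says precisely that for each facet $F$, the $(m-n)\times r$ matrix $\bar S_{\widehat F}$ has an $r\times r$ submatrix of odd determinant; equivalently its columns are linearly independent over $\Z_2$. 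So the task is: lift an $m\times r$ matrix over $\Z_2$ to an integer matrix preserving, for each facet, the property ``some $r\times r$ minor of the row-restricted matrix is odd'' and upgrading it to ``$\pm1$''. For $r=1$ this is immediate — a single column with an odd entry is already a $\Z$-lift that is a monomorphism onto a direct summand, so we may take $S$ to be any $0/1$ lift of $\bar S$. The content is in $r=2$ and $r=3$.

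The key idea I would use is that over $\Z_2$ the number of facets of $K$ is irrelevant beyond bookkeeping: what matters is, for each facet $F$, which $r$-subsets of the complementary $m-n$ rows give an invertible (hence odd-determinant) $r\times r$ minor of $\bar S_{\widehat F}$. I would try to choose the $\Z$-lift $S$ so that its $0/1$ entries already realize $\pm1$ minors wherever $\bar S$ did. Concretely, take the literal $0/1$ lift $S_0$ of $\bar S$ (each entry $0$ or $1$). For $r=1$ this works verbatim. For $r=2$, an odd-determinant $2\times 2$ $0/1$ matrix has determinant in $\{+1,-1\}$ automatically, since $ad-bc$ with entries in $\{0,1\}$ lies in $\{-1,0,1\}$; hence $S_0$ itself already satisfies the $\pm1$-minor condition on every facet, and we are done with $r=2$ by simply taking $S=S_0$. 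The real work is $r=3$: a $3\times3$ matrix with $0/1$ entries and odd determinant can have determinant $\pm1$ but also $\pm3$ (e.g.\ the circulant of $(1,1,0)$ has determinant $2$ — even — but $(0,1,1;1,0,1;1,1,0)$ has determinant $2$ as well; the relevant bad case is determinant $3$, attained by matrices like all-ones-minus-identity complements). So for $r=3$ the $0/1$ lift may fail, and one must adjust signs: replace some $1$'s by $-1$'s.

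For $r=3$ I would argue as follows. Fix the $0/1$ lift $S_0$ of $\bar S$. For each facet $F$, among the $r\times r$ submatrices of $(S_0)_{\widehat F}$ with odd determinant, the determinant is $\pm1$ or $\pm3$ (these are the only odd values a $3\times3$ $0/1$ determinant can take). Call $F$ \emph{bad} if \emph{every} odd-determinant $3\times3$ submatrix of $(S_0)_{\widehat F}$ has determinant $\pm3$. I would show that a single global sign change — negating one cleverly chosen row of $S_0$, i.e.\ flipping all $1$'s in that row to $-1$ — can be used to fix bad facets without breaking good ones, using the fact that a $3\times3$ $0/1$ matrix of determinant $\pm3$ is, up to row/column permutation, the complement $J-I$ of the identity (this is the crucial combinatorial classification: $\det(J-I)=2$ for $3\times 3$, so in fact one rechecks that the only odd possibility is $\pm1$, and the bad case cannot occur). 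If this classification goes through, then in fact $S_0$ already works and $r=3$ reduces to $r=2$. The \textbf{main obstacle} is exactly this classification step: pinning down which $3\times3$ $0/1$ matrices (and more generally which $r\times r$ submatrices arising from $\bar S$ restricted to facet-complements) can have odd determinant yet no $\pm1$ minor, and showing that either they never occur under the facet hypotheses of a PL-sphere $K$, or that a uniform sign assignment on the rows of $S_0$ — which must be consistent across \emph{all} facets simultaneously — repairs them. Handling this cross-facet consistency, rather than any single determinant computation, is where the argument will require care; I expect the small values of $r$ ($\le 3$) and the fact that $m-n$ can be as large as needed are what make a clean uniform choice possible, but the bookkeeping over all facets of $K$ is the delicate part.
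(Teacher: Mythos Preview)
Your approach is exactly the paper's: take the literal $\{0,1\}$-lift $S_0$ of $\bar S$ and check that every odd $r\times r$ minor is already $\pm1$. You carry this through cleanly for $r\le 2$. The confusion is entirely in the $r=3$ paragraph, where you assert that a $3\times 3$ matrix with $\{0,1\}$ entries and odd determinant ``can have determinant $\pm1$ but also $\pm3$''. This is false: the maximal absolute value of the determinant of a $3\times 3$ $\{0,1\}$-matrix is $2$ (a standard fact; one quick way to see it is to expand along a row, each cofactor being a $2\times 2$ $\{0,1\}$-determinant in $\{-1,0,1\}$, but the three cofactors of a $\{0,1\}$-row cannot all contribute with the same sign and be nonzero). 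Hence an odd determinant is automatically $\pm1$, and $S_0$ already works with no sign adjustments whatsoever.

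You partially talk yourself back to this (``$\det(J-I)=2$ \ldots so in fact one rechecks that the only odd possibility is $\pm1$''), but then hedge and declare the ``main obstacle'' to be a cross-facet sign-consistency problem that simply does not arise. There is no obstacle: once you know $|\det|\le 2$ for $3\times3$ $\{0,1\}$-matrices, the proof is one line, for arbitrary simplicial complexes $K$ (no PL-sphere hypothesis is used or needed here). So your proposal is correct in spirit and matches the paper's argument, but the discussion of determinant $\pm3$, bad facets, row negations, and global consistency should be deleted --- it is chasing a difficulty that was never there.
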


\begin{proof}
    Assume an $m \times r$ matrix $S$ over $\Z_2$ represents the freely acting subgroup of $\Z_2^m$ on $\R\cZ_K$.
    Define the $m \times r$ matrix $\tilde{S}$ over $\Z$ all of whose entries $\tilde{s}_{ij}$ are in $\{0, 1\}$ and such that $S \equiv \tilde{S}$ mod~$2$.
    For any facet $\{i_1, \ldots, i_n \}$ of $K$, $S_{\hat{i}_1, \ldots, \hat{i}_n}$ has an $r \times r$ submatrix $R$ whose determinant is $1 \in \Z_2$.
    Then the corresponding submatrix $\tilde{R}$ of $\tilde{S}_{\hat{i}_1, \ldots, \hat{i}_n}$ has an odd determinant.
    Since the absolute value of every square $\{0,1\}$-matrix of size $r \leq 3$ is less than $r$, the determinant of $\tilde{R}$ is indeed $\pm 1 \in \Z$.
    Hence $\tilde{S}$ defines an $r$-dimensional subtorus of $T^m$ acting freely on $\cZ_K$.
\end{proof}

\section{The case $r =4$ : Preliminaries}

\subsection{Characteristic and dual characteristic maps}
Let $A$ be an $n \times m$ matrix over $\Z$ for positive integers $n \leq m$, and $I$ an $n$-subset of $[m]$.
Let $A_I$ denote  the submatrix of $A$ formed by selecting columns indexed with $i \in I$, and $A^I$ the submatrix of $A$ formed by selecting rows indexed with $i \in I$.
Furthermore, $\overline{A}$ represents a matrix whose columns form a basis of the kernel of $A$.
Note that $\overline{A}$ depends on the choice of a basis of the kernel of $A$.
We introduce one important proposition, known as the \emph{linear Gale duality}:

\begin{proposition} \label{proposition: linear Gale duality}
    Let $A$ be an $n \times m$ matrix over $\Z$ for positive integers $n \leq m$.
    For any $n$-subset $I$ of $[m]$, $\det(A_I) = \pm 1$ if and only if $\det(\overline{A}^{I^c}) = \pm 1$
\end{proposition}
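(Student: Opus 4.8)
The plan is to reduce to a normal form via an invertible integer change of coordinates and then compute both determinants explicitly. First I would observe that, up to reordering $[m]$, we may assume $I = \{1, \dots, n\}$, so that $A = \begin{pmatrix} A_I & B \end{pmatrix}$ where $B = A_{I^c}$ is $n \times (m-n)$. Assume $\det(A_I) = \pm 1$; then $A_I \in \mathrm{GL}_n(\Z)$, and multiplying $A$ on the left by $A_I^{-1}$ does not change the kernel of $A$, so we may replace $A$ by $\begin{pmatrix} \mathrm{Id}_n & C \end{pmatrix}$ with $C = A_I^{-1} B \in \Z^{n \times (m-n)}$. The kernel of this matrix is spanned by the columns of $\overline{A} = \begin{pmatrix} -C \\ \mathrm{Id}_{m-n} \end{pmatrix}$, which is an honest integer matrix whose columns form a $\Z$-basis of $\ker(\begin{pmatrix}\mathrm{Id}_n & C\end{pmatrix})$, and since $I^c = \{n+1, \dots, m\}$ we get $\overline{A}^{I^c} = \mathrm{Id}_{m-n}$, hence $\det(\overline{A}^{I^c}) = 1$.

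The remaining point is that the conclusion does not depend on the particular choice of Gale dual $\overline{A}$: any two choices differ by right multiplication by some $P \in \mathrm{GL}_{m-n}(\Z)$, which rescales $\det(\overline{A}^{I^c})$ by $\det(P) = \pm 1$, so the property ``$\det(\overline{A}^{I^c}) = \pm 1$'' is well defined. Also, the initial reordering of $[m]$ only permutes the rows of $\overline{A}$, changing $\det(\overline{A}^{I^c})$ by a sign. Combining these remarks with the computation above proves the forward implication for an arbitrary valid $\overline{A}$.

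For the converse, I would argue by symmetry rather than repeat the computation: $A \mapsto \overline{A}$ is (up to the choice of basis) an involution on the level of the row spaces, in the sense that the row space of $\overline{A}^{\,t}$ is the kernel of $A$'s complementary data; concretely, if $\det(\overline{A}^{I^c}) = \pm 1$ then applying the forward direction to the $(m-n) \times m$ matrix $\overline{A}^{\,t}$ with the $n$-subset $I$ playing the role of the complement yields $\det((\overline{\overline{A}^{\,t}})^{I}) = \pm 1$, and one checks that $\overline{\overline{A}^{\,t}}$ can be taken with row space equal to the row space of $A$, so its $I$-submatrix has determinant $\pm 1$ iff $\det(A_I)$ does. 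I expect the main obstacle to be bookkeeping: making the ``Gale duality is an involution'' step precise while tracking that all the changes of basis live in $\mathrm{GL}(\Z)$ and therefore only introduce signs, rather than losing the unimodularity. Once the normal form $\begin{pmatrix}\mathrm{Id}_n & C\end{pmatrix}$ is in hand, everything else is a short determinant computation.
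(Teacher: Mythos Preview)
The paper does not actually supply a proof of this proposition; it is stated as the known \emph{linear Gale duality} and then used. So there is nothing to compare against on the paper's side, and I will evaluate your argument on its own.

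Your forward direction is correct and is the standard proof: once $\det(A_I)=\pm 1$ you may row-reduce over $\Z$ to $\begin{pmatrix}\mathrm{Id}_n & C\end{pmatrix}$, read off the explicit kernel basis $\begin{pmatrix}-C\\ \mathrm{Id}_{m-n}\end{pmatrix}$, and the independence of the choice of $\overline{A}$ up to $\mathrm{GL}_{m-n}(\Z)$ handles the well-definedness.

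The converse, however, has a genuine gap, and it is exactly at the step you flag as ``one checks that $\overline{\overline{A}^{\,t}}$ can be taken with row space equal to the row space of $A$.'' Over $\Z$ this is false in general: $\ker(\overline{A}^{\,t})$ is the \emph{saturation} of the row space of $A$, not the row space itself, so the double dual recovers $A$ only when the rows of $A$ already span a direct summand of $\Z^m$, i.e.\ when $A\colon\Z^m\to\Z^n$ is surjective. A tiny counterexample: take $n=m=1$ and $A=(2)$. Then $\ker A=0$, so $\overline{A}$ is the empty $1\times 0$ matrix, $I=\{1\}$, $I^c=\varnothing$, and $\det(\overline{A}^{I^c})=1$ by convention, yet $\det(A_I)=2$. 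Thus the ``if'' direction fails without an extra hypothesis.

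This is really a missing hypothesis in the proposition as stated in the paper rather than a flaw unique to your approach: one needs $A$ to be surjective onto $\Z^n$ (equivalently, some $n\times n$ minor of $A$ is $\pm 1$). In every application in the paper $A$ is a characteristic map, so this is automatic, but your write-up should add it explicitly. With that hypothesis in place, your involution argument goes through: surjectivity of $A$ forces the row space to be saturated, hence $\overline{\overline{A}^{\,t}}$ can indeed be taken to be $A^{\,t}$ up to $\mathrm{GL}_n(\Z)$, and the rest is the sign-bookkeeping you describe.
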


Let $K$ be an $(n-1)$-dimensional PL~sphere on $[m]$, and $H$ an \mbox{$r$-dimensional} subtorus of $T^m$ freely acting on $\cZ_K$ as in \eqref{free_action}.
If $r = m-n$, then $H$ is completely described as follows. 
Let us consider a map $\lambda \colon [m] \to \Z^n$, called a \emph{characteristic map} over $K$, such that $\{ \lambda(i_1), \ldots, \lambda(i_k) \}$ is a unimodular set for any simplex $\{i_1, \ldots, i_k\}$ in $K$.
For convenience, we often represent this map by an $n \times m$ matrix
$\lambda=
\begin{bmatrix}
    \lambda(1) & \cdots & \lambda(m)
\end{bmatrix}
$
with elements in $\Z$.
This matrix can be interpreted as a linear map $\Z^m \to \Z^n$, and concurrently, as an element in $\Hom(T^m, T^n)$.
In addition, we call $\overline{\lambda} \colon [m] \to \Z^{m-n}$ a \emph{dual characteristic map} over $K$.
Similarly, we define \emph{mod~$2$ characteristic maps} $\lambda^\R \colon [m] \to \Z_2^n$ over $K$ and \emph{mod~$2$ dual characteristic maps} $\overline{\lambda}^\R \colon [m] \to \Z_2^{m-n}$ over $K$.
In particular, an injective mod~$2$ dual characteristic map is simply called an \emph{IDCM}.

For an $n \times m$ matrix $\lambda$, $\overline{\lambda}$ defines a subtorus $H$ of $T^m$ similar to that described in $\eqref{free_action}$.
By Propositions~\ref{proposition:free action} and ~\ref{proposition: linear Gale duality}, $\lambda$ is a characteristic map over $K$ if and only if the corresponding subtorus $H$ of $\overline{\lambda}$ acts on $\cZ_K$ freely.

When considering the toric space $\cZ_K / H$, the kernel of $\lambda$ itself is essential whereas the choice of a basis of the kernel is not important.
In this context, we consider the concepts of \emph{Davis-Januszkiewicz equivalence}, or simply \emph{D-J equivalence}, for characteristic maps and dual characteristic maps.
Two characteristic maps are said to be D-J equivalent, if one is obtained by row operations from the other.
Two dual characteristic maps are said to be D-J equivalent if one is obtained by column operations from the other.
This also removes the ambiguity arising from the definition of $\overline{\lambda}$.

Observe that the mod~$2$ reduction of a characteristic map $\lambda$ over $K$ is a mod~$2$ characteristic map over $K$.
Conversely, given a mod~$2$ characteristic map $\lambda^\R \colon [m] \to \Z_2^n$ over $K$ and a characteristic map $\lambda \colon [m] \to \Z^n$ over $K$, if $\lambda^\R$ coincides with the composition of $\lambda$ and the modulo $2$ reduction map $\Z^n \to \Z_2^n$, then $\lambda$ is called a \emph{lift} of $\lambda^\R$:

\begin{center}
\begin{tikzpicture}
  \node (A) at (0,2) {$\Z^n$};
  \node (B) at (-2,0) {$[m]$};
  \node (C) at (2,0) {$\Z_2^n$.};

  \draw[->] (B) --  node[below] {$\lambda^\R$} (C);
  \draw[->, dashed] (B) --  node[left] {$~^\exists \lambda \;$} (A);
  \draw[->] (A) -- node[right] {$\mod 2$} (C) ;
\end{tikzpicture}  
\end{center}

Moreover, it is called the \emph{$\{0,1\}$-lift} of $\lambda^\R$ when $\lambda$ sends $[m]$ to $\{0,1\}$-vectors.
Similarly, it is called a \emph{$\{0,\pm 1\}$-lift} when it sends $[m]$ to $\{0,\pm 1\}$-vectors.
Note that the number of $\{0,\pm1 \}$-lifts of a given mod~$2$ characteristic map is finite.

\begin{example} \label{example: general Bott}
    Let $K$ be the join $\partial \Delta^{n_1} * \partial \Delta^{n_2} * \cdots * \partial \Delta^{n_p}$ of the boundaries of $p$ simplices.
    We denote its set of vertices as $$\{i_j \mid 1 \leq i \leq p, 1 \leq j \leq n_i+1 \},$$ where $i_1$, $i_2$, $\ldots$, $i_{n_i+1}$ comes from the vertices of $\partial \Delta^{n_i}$.
    Let $F^i_j = \{i_1, i_2, \ldots, i_{n_j+1}\} \setminus \{i_j\}$.
    The set of facets of $K$ is $$\{\cup_{i=1} ^{p} F^i_{j_i} \mid 1 \leq j_i \leq {n_i+1} \}. $$
    
    By \cite{Choi-Masuda-Suh2010}, up to D-J equivalence and vertex relabeling, a mod~$2$ characteristic map over $K$ is of the form

    $$\lambda^\R = \begin{bmatrix}
                        \lambda_1^\R              &               &        &                    &                  \\
                        \ast                      & \lambda_2^\R  &        &                    &                  \\
                        \multirow{2}{*}{$\vdots$} & \ast          & \ddots &                    &                   \\
                                                  & \vdots        &        & \lambda_{p-1}^\R &                  \\
                        \ast                      & \ast          &        & \ast               & \lambda_{p}^\R \\
                    \end{bmatrix},$$
                     where $\lambda^\R_i$ is a mod~$2$ characteristic map over $\Delta^{n_i}$ and the empty spaces display zeros.
    Up to D-J equivalence and vertex relabeling, $$\lambda^\R_i = \kbordermatrix{ & i_1 & \cdots & i_{n_i} & i_{n_i+1} \\
                    & \multicolumn{3}{c}{$I_{n_i}$} & \begin{array}{c} 1\\\vdots\\1\end{array}}.$$
    Let $\widetilde{\lambda^\R}$ be the $\{0, 1\}$-matrix over $\Z$ such that $\widetilde{\lambda^\R} \equiv \lambda^\R$ mod~$2$. 
    We denote by $\widetilde{\lambda^\R_i}(F^{i}_{j})$ the submatrix obtained by removing the column indexed by $i_j$.
    Note that $\det (\widetilde{\lambda^\R_i}(F^{i}_{j})) = \pm 1$.
    Then for a facet $\sigma = \cup_{i=1} ^{p} F^i_{j_i}$ of $K$, the determinant of the submatrix consisting of the columns of $\widetilde{\lambda^\R}$ corresponding to $\sigma$ is $\det(\widetilde{\lambda^\R_i}(F^{1}_{j_1})) \times \dots \times \det(\widetilde{\lambda^\R_i}(F^{p}_{j_p})|) = \pm 1.$
    Hence $\widetilde{\lambda^\R}$ is the $\{0,1\}$-lift of $\lambda^\R$, and it shows that every mod~$2$ characteristic map over the join of the boundaries of simplices has the $\{0,1\}$-lift.
\end{example}

\begin{lemma} \label{lemma:Choi-Park2016}\cite{Choi-Park2016}
    Let $A$ be an $n \times n$ matrix over $\Z$ whose determinant is odd.
    Then there is an $n \times n$ matrix $B$ over $\Z$ such that $\det(B)=\pm 1$ and $A \equiv B$ mod~$2$.
\end{lemma}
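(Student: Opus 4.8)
The plan is to build $B$ column by column (or entry by entry) by starting from $A$ and correcting it with even integers, exploiting that an odd determinant is nonzero modulo $2$ so the columns of $A$ are linearly independent over $\Z_2$. First I would recall the normal-form approach: any integer matrix $A$ admits a Smith normal form $A = U D V$ with $U, V \in \mathrm{GL}_n(\Z)$ and $D = \mathrm{diag}(d_1, \dots, d_n)$ where $d_1 \mid d_2 \mid \dots \mid d_n$ and $\prod d_i = \pm\det(A)$, which is odd; hence every $d_i$ is odd. Now replace $D$ by $D' = \mathrm{diag}(1, \dots, 1)$ — equivalently, change each $d_i$ to $1$ by subtracting $d_i - 1$, an even number since $d_i$ is odd. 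Set $B = U D' V$. Then $\det(B) = \det(U)\det(V) = \pm 1$, and modulo $2$ we have $D \equiv D'$ (each diagonal entry is odd, hence $\equiv 1$), so $B = UD'V \equiv UDV = A \pmod 2$.

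Alternatively, and perhaps more in the spirit of the paper's later inductive arguments, one can argue directly by induction on $n$. For $n = 1$ the claim is immediate: $A = (a)$ with $a$ odd, take $B = (1)$. For the inductive step, since $\det(A)$ is odd, reduction modulo $2$ gives an invertible matrix over $\Z_2$; in particular the first column of $A$ is nonzero mod $2$, so some entry $a_{i1}$ is odd. Using row operations over $\Z$ (which preserve the determinant up to sign and which we track), one can arrange the first column to be $(1, 0, \dots, 0)^{\mathsf T}$ mod $2$ and then, by adding even multiples, exactly $(1,0,\dots,0)^{\mathsf T}$ over $\Z$; the point is that every correction needed is by an even integer, so it does not change the residue mod $2$. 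Expanding along the first column reduces to an $(n-1)\times(n-1)$ matrix of odd determinant, to which the inductive hypothesis applies, after which one undoes the recorded row operations — each of which is an integral elementary operation, hence preserves both $\det = \pm 1$ and the congruence mod $2$.

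The only mild subtlety — the step I would flag as the place to be careful rather than a genuine obstacle — is bookkeeping: one must ensure that the modifications used to turn $A$ into $B$ are \emph{all} by even integers so that the congruence $A \equiv B \pmod 2$ is genuinely preserved, and that the operations used to reduce to smaller size are honest integral row/column operations (so the resulting $\pm 1$ determinant is not spoiled). The Smith normal form route is cleanest because it packages all of this into the single observation that an odd product forces every invariant factor to be odd; I would present that as the main argument and perhaps remark that an elementary induction also works.
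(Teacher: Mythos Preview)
Your proof is correct. The Smith normal form argument is clean and complete: since $\det A$ is odd, every invariant factor $d_i$ is odd, hence $D\equiv I_n\pmod 2$, and $B=UV$ does the job. The inductive alternative is also fine, modulo the bookkeeping you already flagged.

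As for comparison: the paper does not actually prove this lemma. It is stated with a citation to \cite{Choi-Park2016} and used as a black box in the proof of Proposition~\ref{proposition:lifting_is_D-J_class_property}. So there is no ``paper's own proof'' to compare against; your Smith normal form argument is a perfectly good self-contained proof to fill that gap.
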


\begin{proposition} \label{proposition:lifting_is_D-J_class_property}
   The existence of a lift is a property of the D-J class.
\end{proposition}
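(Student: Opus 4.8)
The plan is to reduce the statement to the surjectivity of the reduction homomorphism $GL_n(\Z) \to GL_n(\Z_2)$, which is exactly the content of Lemma~\ref{lemma:Choi-Park2016}. Recall that, viewing characteristic maps over $K$ as $n \times m$ integer matrices, two of them are D-J equivalent precisely when one is obtained from the other by integral row operations, i.e.\ by left multiplication by an element of $GL_n(\Z)$; likewise two mod~$2$ characteristic maps over $K$ are D-J equivalent exactly when one equals $P$ times the other for some $P \in GL_n(\Z_2)$ (row operations over $\Z_2$ generate $GL_n(\Z_2)$). So it suffices to show: if a mod~$2$ characteristic map $\lambda^\R$ over $K$ admits a lift and $\mu^\R = P\lambda^\R$ with $P \in GL_n(\Z_2)$, then $\mu^\R$ admits a lift.

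First I would lift the change-of-basis matrix itself. Choose any $\{0,1\}$-matrix $\widetilde{P}_0$ over $\Z$ with $\widetilde{P}_0 \equiv P \pmod 2$. Since $\det P = 1$ in $\Z_2$, the integer $\det \widetilde{P}_0$ is odd, so Lemma~\ref{lemma:Choi-Park2016} yields an integer matrix $\widetilde{P}$ with $\det \widetilde{P} = \pm 1$ and $\widetilde{P} \equiv \widetilde{P}_0 \equiv P \pmod 2$; that is, $\widetilde{P} \in GL_n(\Z)$ reduces to $P$. Now let $\lambda$ be a lift of $\lambda^\R$ and set $\mu \coloneqq \widetilde{P}\lambda$. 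Reducing mod~$2$ gives $\mu \equiv \widetilde{P}\lambda^\R = P\lambda^\R = \mu^\R$, so $\mu$ reduces to $\mu^\R$. It then remains only to check that $\mu$ is a genuine characteristic map over $K$, i.e.\ that $\{\mu(i_1), \ldots, \mu(i_k)\}$ is unimodular for every simplex $\{i_1, \ldots, i_k\} \in K$; but this is immediate since left multiplication by $\widetilde{P} \in GL_n(\Z)$ carries a unimodular subset of $\Z^n$ to a unimodular subset (for a facet, $\det \mu_{\{i_1,\ldots,i_n\}} = \det(\widetilde{P})\,\det(\lambda_{\{i_1,\ldots,i_n\}}) = \pm 1$, and the general case follows since every simplex is a face of a facet). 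Hence $\mu$ is a lift of $\mu^\R$, and as D-J equivalence is an equivalence relation, the existence of a lift is constant on each D-J class.

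I do not expect a genuine obstacle: the whole argument hinges on Lemma~\ref{lemma:Choi-Park2016}, and once the change-of-basis matrix has been lifted to $GL_n(\Z)$ the rest is formal. The one point requiring a little care is the verification that $\mu = \widetilde{P}\lambda$ really is a characteristic map and not merely a matrix reducing to $\mu^\R$ — and this is precisely where the invertibility ($\det = \pm 1$) of $\widetilde{P}$, as opposed to merely an odd-determinant lift, is used. If one also wants the analogous statement for dual characteristic maps, the same argument applies with right multiplication by $GL_{m-n}(\Z)$ in place of left multiplication, or it can be deduced from the characteristic-map case via the linear Gale duality of Proposition~\ref{proposition: linear Gale duality}.
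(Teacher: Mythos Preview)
Your proof is correct and follows essentially the same route as the paper: lift the change-of-basis matrix $P\in GL_n(\Z_2)$ to an odd-determinant integer matrix, invoke Lemma~\ref{lemma:Choi-Park2016} to replace it by some $\widetilde{P}\in GL_n(\Z)$ with the same mod~$2$ reduction, and then left-multiply the given lift of $\lambda^\R$ by $\widetilde{P}$. Your write-up is slightly more explicit than the paper's in spelling out why $\widetilde{P}\lambda$ is again a characteristic map, but the argument is the same.
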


\begin{proof}
Let $\lambda^\R$ and $\mu^\R$ be two D-J equivalent mod~$2$ characteristic maps over $K$.
There is an invertible matrix $A$ over $\Z_2$ such that $A \lambda^\R = \mu^\R$.
Suppose that $\widetilde{\lambda^\R}$ is a lift of $\lambda^\R$.
There is an integer matrix $\widetilde{A}$ such that $\widetilde{A} \equiv A$ mod~$2$ and the determinant of $\widetilde{A}$ is odd.
Lemma~\ref{lemma:Choi-Park2016} ensures that there is an invertible integer matrix $B$ such that
$B \widetilde{\lambda^\R} \equiv \widetilde{A} \widetilde{\lambda^\R} \equiv \mu^\R$ mod~$2$, that is, $B \widetilde{\lambda^\R}$ is a lift of $\mu^\R$ as well.
\end{proof}

For the sake of convenience, we define the \emph{dual complex} $\overline{K}$ of $K$ as the simplicial complex whose facets are the cofacets of $K$.
Also, we regard $\overline{\lambda}$ as a map from $[m]$ to $\Z^{m-n}$ such that $\overline{\lambda}(i)$ is the $i$th row of $\overline{\lambda}$, as we did for characteristic maps.
Then by the linear Gale duality, $\overline{\lambda}$ is a characteristic map over $\overline{K}$.

\begin{lemma} \label{lemma: dual lifting}
    Let $K$ be a simplicial complex.
    A mod~$2$ characteristic map $\lambda^\R$ over $K$ has a lift if and only if $\overline{\lambda^\R}$ has a lift as a mod~$2$ characteristic map over $\overline{K}$.
\end{lemma}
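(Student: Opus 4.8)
The plan is to use that linear Gale duality is an involution which turns integral lifts over $K$ into integral lifts over $\overline{K}$, and then to appeal to Proposition~\ref{proposition:lifting_is_D-J_class_property}, according to which having a lift depends only on the D-J class. Concretely, I would first show: if $\lambda^\R$ has an integral lift $\lambda$, then a choice of integral Gale dual $\overline{\lambda}$ is itself an integral lift of some mod~$2$ dual characteristic map D-J equivalent to $\overline{\lambda^\R}$; by Proposition~\ref{proposition:lifting_is_D-J_class_property} this already gives one implication, and the other follows by applying the same argument to $\overline{\lambda^\R}$ over $\overline{K}$.

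For the forward direction, assume $\lambda^\R$ has a lift $\lambda\colon[m]\to\Z^n$, a genuine characteristic map over $K$. Since $\lambda$ is unimodular on an $(n-1)$-simplex of $K$, the integer matrix $\lambda$ has an $n\times n$ submatrix of determinant $\pm1$, hence $\lambda\colon\Z^m\to\Z^n$ is surjective and $\ker\lambda$ is a rank $m-n$ direct summand of $\Z^m$; consequently the columns of an integral Gale dual $\overline{\lambda}$ form a $\Z$-basis of this direct summand. Tensoring the split exact sequence $0\to\ker\lambda\to\Z^m\xrightarrow{\,\lambda\,}\Z^n\to0$ with $\Z_2$ keeps it exact, so the columns of $\overline{\lambda}\bmod 2$ form a $\Z_2$-basis of $\ker(\lambda\bmod 2)=\ker\lambda^\R$. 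Thus $\overline{\lambda}\bmod 2$ is a mod~$2$ dual characteristic map of $\lambda^\R$, and since any two of these differ by an invertible change of basis of the kernel, i.e. by column operations, $\overline{\lambda}\bmod 2$ is D-J equivalent to $\overline{\lambda^\R}$. By linear Gale duality (Propositions~\ref{proposition:free action} and~\ref{proposition: linear Gale duality}) $\overline{\lambda}$ is a characteristic map over $\overline{K}$, and it is evidently a lift of $\overline{\lambda}\bmod 2$; hence $\overline{\lambda}\bmod 2$ has a lift, and Proposition~\ref{proposition:lifting_is_D-J_class_property} transports the existence of a lift to the D-J equivalent map $\overline{\lambda^\R}$.

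The converse is then formal. Applying the implication just proved to $\overline{\lambda^\R}$, regarded as a mod~$2$ characteristic map over the simplicial complex $\overline{K}$, shows that $\overline{\lambda^\R}$ has a lift over $\overline{K}$ if and only if $\overline{\overline{\lambda^\R}}$ has a lift over $\overline{\overline{K}}$. Since linear Gale duality is involutive — $\overline{\overline{K}}=K$, and $\overline{\overline{\lambda^\R}}$ is D-J equivalent to $\lambda^\R$ (both have the same row space in $\Z_2^m$, as one checks from $\lambda^\R\,\overline{\lambda^\R}=0$ together with a rank count) — one further application of Proposition~\ref{proposition:lifting_is_D-J_class_property} completes the proof.

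The step requiring the most care is the identification of the columns of $\overline{\lambda}\bmod 2$ with a basis of $\ker\lambda^\R$: this is precisely where it matters that $\lambda$ is a genuine characteristic map, so that $\ker\lambda$ is a direct summand and not merely a subgroup — for an arbitrary integer matrix, reducing a basis of the integral kernel mod~$2$ need neither span nor even lie inside the mod~$2$ kernel. Once this is in place and everything is phrased up to D-J equivalence, the remaining argument is purely bookkeeping, the transport of lifts along D-J equivalences being delegated to Proposition~\ref{proposition:lifting_is_D-J_class_property}.
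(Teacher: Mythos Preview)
Your proof is correct and follows essentially the same route as the paper's: take an integral lift $\lambda$, pass to its integral Gale dual $\overline{\lambda}$, observe that the mod~$2$ reduction of $\overline{\lambda}$ is a basis of $\ker\lambda^\R$ and hence D-J equivalent to $\overline{\lambda^\R}$, and conclude. You are more explicit than the paper about why the mod~$2$ reduction of $\overline{\lambda}$ gives a \emph{basis} of $\ker\lambda^\R$ (via the split exact sequence), and you invoke Proposition~\ref{proposition:lifting_is_D-J_class_property} explicitly where the paper simply says ``up to D-J equivalence''; but these are elaborations of the same argument, not a different one.

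One tiny imprecision in your final paragraph: for an arbitrary integer matrix, the mod~$2$ reduction of a basis of the integral kernel always \emph{does} lie inside the mod~$2$ kernel (reduce $\lambda v=0$ mod~$2$); the genuine issue is only that it may fail to span or to be linearly independent there. This does not affect your proof.
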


\begin{proof}
    Suppose that $\lambda^\R$ has a lift $\widetilde{\lambda^\R}$.
    Then $\overline{\widetilde{\lambda^\R}}$ is a characteristic map over $\overline{K}$.
    From the mod~$2$ reduction of the equation $\widetilde{\lambda^\R} \times \overline{\widetilde{\lambda^\R}} = \mathds{O}$, the mod~$2$ reduction of the columns of $\overline{\widetilde{\lambda^\R}}$ is a basis of $\ker \lambda^\R$.
    Hence up to D-J equivalence, $\overline{\lambda^\R} \equiv \overline{\widetilde{\lambda^\R}}$ mod~$2$, that is, $\overline{\widetilde{\lambda^\R}}$ is a lift of $\overline{\lambda^\R}$.

    The other direction is essentially the same.
\end{proof}

\begin{problem}[(toric) Lifting problem] \label{equiv_problem}
    Let $K$ be a PL~sphere.
    Does any mod~$2$ characteristic map over $K$ have a lift?
    Equivalently, does any mod~$2$ dual characteristic map over $K$ have a lift as a mod~$2$ characteristic map over $\overline{K}$.
\end{problem}

\subsection{Wedge operations}
Let $K$ be a simplicial complex on the vertex set $V$ and $\sigma$ a simplex in $K$.
The \emph{link} of $\sigma$ in $K$ is the simplicial complex defined by
$$
    \lk_K(\sigma) \coloneqq \{\tau \in K \mid \sigma \cup \tau \in K, \sigma \cap \tau = \varnothing\},
$$
and the \emph{deletion} of $\sigma$ in $K$ is the simplicial complex defined by
$$
    K \setminus \sigma \coloneqq \{\tau \in K \mid \sigma \not\subset \tau\}.
$$
For a singleton face $\{v\}$ of $K$, its link and deletion are denoted simply by $\lk_K (v)$ and $K \setminus v$, respectively.

For another simplicial complex $L$ on a disjoint vertex set from $K$, the \emph{join} $K \ast L$ of $K$ and $L$ is defined as the simplicial complex
$$
    K \ast L \coloneqq \{\sigma \cup \tau \mid \sigma \in K, \tau \in L \}.
$$
The \emph{suspension} of $K$ is given by
$$
    \susp(K) \partial I \ast K,
$$
where $I$ is a $1$-simplex with two new vertices $v_1$ and $v_2$, and $\partial I$ is its boundary complex.
In $\susp(K)$, the pair $\{v_1, v_2\}$ is referred to as a \emph{suspended pair}, and each vertex in it is called a \emph{suspended vertex}.

The \emph{wedge} of $K$ at a vertex $v$ of $K$ is defined as
$$
    \wed_v(K) \coloneqq (I \ast \lk_K(v)) \cup (\partial I \ast (K \setminus v)),
$$
where $I$ is a $1$-simplex comprising two new vertices.
It is evident that the link of a new vertex added after applying a wedge to $K$ is isomorphic to $K$.
In that sense, we often use $v_1$ and $v_2$ to refer to the two copies of $v$ in $\wed_v(K)$.
Consequently, $\wed_v(K)$ has vertex set $(V \setminus \{v\}) \cup \{v_1, v_2\}$.
Here, two vertices $v_1$ and $v_2$ are referred to as \emph{wedged vertices} of $v$, and the edge connecting them as the \emph{wedged edge} of $v$.
Notably, $\susp(K)$ can be viewed as a wedge at a ghost vertex of $K$.

The wedge operation can be defined equivalently as an easy combinatorial operation on the minimal non-faces of $K$: we duplicate the vertex $v$ in each minimal non-face of $K$ it appears in.
More precisely, let $\eta \subset V$ be a subset of the vertex set of $K$.
\begin{enumerate}
  \item If $\eta$ contains $v$, then $\eta$ is a minimal non-face of $K$ if and only if $\eta \setminus \{v\} \cup \{v_1, v_2 \}$ is a minimal non-face of $\wed_v(K)$.
  \item If $\eta$ does not contain $v$, then $\eta$ is a minimal non-face of $K$ if and only if $\eta$ is a minimal non-face of $\wed_v(K)$.
\end{enumerate}

As for suspensions, one can easily prove that the minimal non-faces of $K_1 \ast K_2$ is the union of the minimal non-faces of $K_1$ and $K_2$.
Then the minimal non-faces of $\partial I \ast K$ is obtained by adding $I$ in the minimal non-faces of $K$.
we can add a ghost vertex to $K$ which becomes a minimal non-face of $K$.
With this perspective, two consecutive wedge operations and join operations, including suspension, are associative and commutative with appropriate vertex identification.

Conversely, suppose that there are two vertices $v_1$ and $v_2$ such that for any minimal non-face $\eta$ of $K$, $\{v_1, v_2\} \subset \eta$ or $\{v_1, v_2\} \cap \eta = \varnothing$.
If $\sigma$ is a facet of $K$ containing neither $\{v_1\}$ nor $\{v_2\}$, then $\{v_1\} \cup \sigma$ is a non-face, so there is a minimal non-face $\eta$ of $K$ containing $v_1$.
This contradicts to the assumption.
Hence every facet of $K$ contains $v_1$ or $v_2$.
By the following lemma, if $\{v_1, v_2\}$ is not a minimal non-face of $K$, then it is a wedged edge of $K$, and otherwise, it is a suspended pair of $K$.

\begin{lemma} \cite{CP_wedge_2} \label{lemma:DCM}
    Let $K$ be a PL~sphere, and $v_1$ and $v_2$ be two vertices of $K$.
    If every facet of $K$ contains $v_1$ or $v_2$, then $K$ equals to either $\susp(L)$ with a suspended pair $\{v_1, v_2\}$, or $\wed_v(L)$ with wedged edge $\{v_1, v_2\}$ for some lower dimensional PL~sphere $L$.
\end{lemma}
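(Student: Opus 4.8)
The plan is to reduce the lemma to the following \emph{claim}: every minimal non-face $\eta$ of $K$ satisfies $\{v_1,v_2\}\subseteq\eta$ or $\{v_1,v_2\}\cap\eta=\varnothing$. Granting the claim, the two cases of the lemma drop out of the minimal-non-face descriptions of the wedge and the suspension developed above. Set $L:=\lk_K(v_1)$; since the link of a vertex of a PL~sphere is a PL~sphere of one lower dimension, $L$ is a PL~sphere with $\dim L=\dim K-1$, which is the desired lower-dimensional PL~sphere. If $\{v_1,v_2\}\in K$, then $\{v_1,v_2\}$ is not a minimal non-face and $v_2$ is a vertex of $L$; the claim says exactly that the pair $\{v_1,v_2\}$ satisfies the condition in the minimal-non-face description of the wedge, so $K=\wed_{v_2}(L)$ with wedged edge $\{v_1,v_2\}$ (the two new copies of $v_2$ being renamed $v_1$ and $v_2$). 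If $\{v_1,v_2\}\notin K$, then, each singleton being a face, $\{v_1,v_2\}$ is itself a minimal non-face, and by the claim it is the only minimal non-face meeting $\{v_1,v_2\}$; since moreover $L$ coincides with the full subcomplex of $K$ on $V\setminus\{v_1,v_2\}$, the minimal non-faces of $K$ are exactly $\{v_1,v_2\}$ together with those of $L$, so $K=\susp(L)$ with suspended pair $\{v_1,v_2\}$. In either case the lemma follows, modulo the claim.

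To prove the claim I use the single nontrivial property of PL~spheres that is needed here: a PL~sphere is a closed pseudomanifold, so it is pure and every ridge (codimension-one face) is contained in exactly two facets. Suppose the claim fails; after possibly exchanging $v_1$ and $v_2$, there is a minimal non-face $\eta$ with $v_1\in\eta$ and $v_2\notin\eta$. Since $\eta$ is minimal, $\rho:=\eta\setminus\{v_1\}$ is a face of $K$; extend it to a facet $\sigma\supseteq\rho$. As $\eta$ is a non-face while $\rho\subseteq\sigma$, we must have $v_1\notin\sigma$, hence $v_2\in\sigma$ by hypothesis. Then $\tau:=\sigma\setminus\{v_2\}$ is a ridge, so it lies in exactly two facets, one of which is $\sigma$; write the other as $\sigma'=\tau\cup\{w\}$ with $w\notin\tau$ and $w\neq v_2$. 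Since $v_2\notin\sigma'$, the hypothesis forces $v_1\in\sigma'$, so $w=v_1$. But $\rho\subseteq\sigma$ and $v_2\notin\rho$ give $\rho\subseteq\tau$, whence $\eta=\rho\cup\{v_1\}\subseteq\tau\cup\{v_1\}=\sigma'\in K$, contradicting that $\eta$ is a non-face. This proves the claim.

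The main obstacle is precisely the claim; the remaining steps — the minimal-non-face bookkeeping that converts the claim into the wedge or suspension decomposition, the identification of the base with $\lk_K(v_1)$, and the standard fact about vertex links of PL~spheres — are routine once the claim is available. It is worth noting that the PL~sphere hypothesis is essential exactly at this point: for a triangulated $2$-disk such as the three triangles $\{1,2,4\},\{1,3,4\},\{2,3,4\}$ arranged around the vertex $4$, every facet contains $1$ or $4$, but the unique minimal non-face $\{1,2,3\}$ contains $1$ and not $4$, so $K$ is neither a wedge nor a suspension at $\{1,4\}$; correspondingly the ridge $\{2,3\}$ lies in a single facet, which is exactly where the argument for the claim would break.
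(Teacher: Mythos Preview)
The paper does not prove this lemma; it is quoted from \cite{CP_wedge_2} without argument. Your proposal supplies a correct, self-contained proof. The heart of it is the claim that every minimal non-face of $K$ either contains $\{v_1,v_2\}$ or is disjoint from it, and your ridge argument using the pseudomanifold property is clean and correct: extend $\rho=\eta\setminus\{v_1\}$ to a facet $\sigma$, pass across the ridge $\sigma\setminus\{v_2\}$ to the unique second facet, and force $\eta$ into that facet. This is precisely where the PL~sphere hypothesis enters, as your disk example nicely illustrates.

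It is worth noting the relation to the surrounding text. The paragraph immediately preceding the lemma proves the \emph{converse} of your claim (minimal-non-face condition $\Rightarrow$ every facet contains $v_1$ or $v_2$) and then \emph{invokes} the lemma to conclude. You instead prove the forward implication (every facet contains $v_1$ or $v_2$ $\Rightarrow$ minimal-non-face condition) and then bypass the citation entirely by reading off the wedge/suspension structure directly from the minimal-non-face descriptions already established in the paper. The ``bookkeeping'' you defer is genuinely routine: for the wedge case one checks that the minimal non-faces of $K$ match those of $\wed_{v_2}(\lk_K(v_1))$ type by type, and for the suspension case that $\{v_1,v_2\}$ is the \emph{only} minimal non-face meeting $\{v_1,v_2\}$ (any larger one would strictly contain the non-face $\{v_1,v_2\}$, violating minimality), whence the remaining minimal non-faces are exactly those of the induced subcomplex on $V\setminus\{v_1,v_2\}=\lk_K(v_1)$. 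Both verifications go through without incident.
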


\begin{corollary} \label{cor:repetition of rows}
    If a mod~$2$ dual characteristic map $\overline{\lambda^\R}$ over a PL~sphere $K$ satisfies $\overline{\lambda^\R}(v_1) = \overline{\lambda^\R}(v_2)$ for some vertices $v_1$ and $v_2$ of $K$, then $\{v_1, v_2\}$ is a suspended pair or a wedged edge of $K$.
\end{corollary}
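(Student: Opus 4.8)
The plan is to reduce the statement to Lemma~\ref{lemma:DCM}: I will show that the hypothesis $\overline{\lambda^\R}(v_1)=\overline{\lambda^\R}(v_2)$ forces every facet of $K$ to contain $v_1$ or $v_2$, after which Lemma~\ref{lemma:DCM} immediately yields that $K=\susp(L)$ with suspended pair $\{v_1,v_2\}$, or $K=\wed_v(L)$ with wedged edge $\{v_1,v_2\}$, for some lower-dimensional PL~sphere $L$. One may assume $v_1\neq v_2$, since otherwise $\{v_1,v_2\}$ is a singleton and the conclusion is vacuous; and since $K$ is an $(n-1)$-dimensional PL~sphere one has $m\geq n+1$, hence $m-n\geq 1$.

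To establish the facet-covering condition, I would unwind the meaning of a mod~$2$ dual characteristic map. By the linear Gale duality (Proposition~\ref{proposition: linear Gale duality}), $\overline{\lambda^\R}$ being a mod~$2$ dual characteristic map over $K$ amounts to $\overline{\lambda^\R}$ being a mod~$2$ characteristic map over the dual complex $\overline{K}$, or equivalently, by Proposition~\ref{proposition:free action}, to the requirement that for every facet $\sigma=\{i_1,\ldots,i_n\}$ of $K$ the $(m-n)\times(m-n)$ submatrix $\overline{\lambda^\R}_{\hat{i}_1,\ldots,\hat{i}_n}$ be invertible over $\Z_2$. In particular, for each such $\sigma$ the vectors $\overline{\lambda^\R}(j)$ with $j\in[m]\setminus\sigma$ are pairwise distinct, being the rows of an invertible matrix. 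Now suppose, for contradiction, that some facet $\sigma$ contains neither $v_1$ nor $v_2$. Then $v_1,v_2\in[m]\setminus\sigma$, so $\overline{\lambda^\R}(v_1)$ and $\overline{\lambda^\R}(v_2)$ are both among those complementary rows; since $v_1\neq v_2$ while $\overline{\lambda^\R}(v_1)=\overline{\lambda^\R}(v_2)$, this contradicts the pairwise distinctness. Hence every facet of $K$ contains $v_1$ or $v_2$, and Lemma~\ref{lemma:DCM} finishes the proof.

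I do not expect a genuine obstacle: the argument is essentially a translation of the definition of a dual characteristic map that funnels into Lemma~\ref{lemma:DCM}, which carries the real combinatorial weight. The only points demanding a word of care are the degenerate ones already noted ($v_1\neq v_2$ and $m-n\geq 1$); indeed, when $m-n=1$ the covering condition holds trivially, as a single vertex of $[m]\setminus\sigma$ cannot equal two distinct vertices.
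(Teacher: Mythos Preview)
Your proposal is correct and follows exactly the same approach as the paper: you show that the non-singularity of $\overline{\lambda^\R}$ forces every facet of $K$ to contain $v_1$ or $v_2$, then invoke Lemma~\ref{lemma:DCM}. The paper's proof is simply a one-line compression of your argument (``By the non-singularity of $\overline{\lambda^\R}$, every facet of $K$ contains $v_1$ or $v_2$. Then apply the previous lemma.''); your version just makes explicit the reason why two equal rows in the cofacet submatrix contradict invertibility.
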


\begin{proof}
  By the non-singularity of $\overline{\lambda^\R}$, every facet of $K$ contains $v_1$ or $v_2$.
  Then apply the previous lemma.
\end{proof}

Consider the vertex set of $K$ to be $[m]=\{1, \ldots, m\}$.
In light of the associative and commutative nature of wedge operations, we introduce the notation $K(J)$, termed a \emph{$J$-construction} of $K$ in \cite{BBCG2015}, for a positive integer $m$-tuple $J=(j_1, j_2, \ldots, j_m)$.
This represents the simplicial complex obtained by applying multiple wedge operations to $K$; for each $i \in [m]$, wedge operations are applied $j_i-1$ times to $K$ at $i$ or its copied vertices.
We will often denote the copied vertices of $i$ by $i_1, i_2, \ldots, i_{j_i}$.
For the sake of convenience, even when $j_i = 1$, we treat $i$ as $i_1$, and we say $i_k$ is a wedged vertex of $i$ for each $k \geq 1$.

In addition, due to the commutativity and associativity of the operations involved, we have the relationship:
\begin{equation} \label{eq:J-construction of suspension}
  (\partial I \ast K)(j_1, j_2, \ldots, j_{m+2}) = \partial I(j_1, j_2) \ast K(j_3, \ldots, j_{m+2}),
\end{equation}
where $I$ is the $1$-simplex on $\{1, 2\}$.
This leads to two characterizations regarding the suspension and wedge operations.
\begin{proposition} \label{proposition:suspension wedge}
    Let $K$ be a simplicial complex, $I$ a $1$-simplex, $v$ a vertex of $K$. Then:
    \begin{enumerate}
      \item $K$ is a suspension if and only if so is $\wed_v(K)$ for any non-suspended vertex $v$ of $K$,
      \item $K$ is a wedge if and only if so is $\Sigma(K)$.
    \end{enumerate}
\end{proposition}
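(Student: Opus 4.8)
The strategy is to reduce both statements to the purely combinatorial description of wedges and joins in terms of minimal non-faces, together with Lemma~\ref{lemma:DCM} and the $J$-construction identity \eqref{eq:J-construction of suspension}. Recall that $\Sigma(K) = \partial I \ast K$ and that, as noted just before the proposition, $\Sigma(K) = \wed_v(K)$ where $v$ is a ghost vertex of $K$; more generally a wedge adds a pair $\{v_1,v_2\}$ of vertices such that $v_1$ or $v_2$ lies in every facet, and this pair is a wedged edge (not a minimal non-face) while a suspension adds such a pair that \emph{is} a minimal non-face.

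For part (2), I would argue as follows. If $K = \wed_w(L)$ for some $L$, then by commutativity and associativity of wedge and join operations (equivalently, by \eqref{eq:J-construction of suspension} applied with the appropriate tuple, taking $K$ there to be $L$ and $\partial I$ the suspension coordinates), $\Sigma(K) = \partial I \ast \wed_w(L) = \wed_w(\partial I \ast L) = \wed_w(\Sigma(L))$, so $\Sigma(K)$ is a wedge. Conversely, suppose $\Sigma(K)$ is a wedge, say with wedged edge $\{v_1,v_2\}$; I must show $K$ is a wedge. The pair $\{a,b\}$ of suspended vertices of $\Sigma(K)=\partial I \ast K$ is a minimal non-face of $\Sigma(K)$, whereas $\{v_1,v_2\}$ is not, so $\{v_1,v_2\}\ne\{a,b\}$. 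Using the minimal-non-face description of the wedge, the pair $\{v_1,v_2\}$ has the property that every minimal non-face of $\Sigma(K)$ either contains both of $v_1,v_2$ or is disjoint from both; I then check this forces $\{v_1,v_2\}$ to be disjoint from $\{a,b\}$ (if, say, $v_1=a$, then since $\{a,b\}$ is a minimal non-face not containing $v_2$, the dichotomy fails), hence $v_1,v_2$ are both vertices of $K$ and the same dichotomy restricted to the minimal non-faces of $K$ (which, together with the singleton $\{a\},\{b\}$ adjoined appropriately, make up those of $\Sigma(K)$) shows $\{v_1,v_2\}$ witnesses that $K$ is a wedge at the corresponding vertex.

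For part (1), one direction is exactly the computation above read in reverse: if $K=\Sigma(L)$ then $\wed_v(K)=\wed_v(\Sigma(L))=\Sigma(\wed_v(L))$ by commutativity of the operations, for any vertex $v$, and in particular for any non-suspended $v$; so $\wed_v(K)$ is a suspension. For the converse, suppose $v$ is a non-suspended vertex of $K$ and $\wed_v(K)$ is a suspension, with suspended pair $\{a,b\}$; I want $K$ to be a suspension. Let $v_1,v_2$ be the wedged vertices of $v$ in $\wed_v(K)$. Since $\{a,b\}$ is a minimal non-face of $\wed_v(K)$ and $\{v_1,v_2\}$ is not (it is the wedged edge), we have $\{a,b\}\ne\{v_1,v_2\}$; and by the minimal-non-face dictionary for the wedge at $v$, every minimal non-face of $\wed_v(K)$ containing $v_1$ also contains $v_2$ and vice versa, so arguing as before $\{a,b\}\cap\{v_1,v_2\}=\varnothing$ unless $v$ itself were one of $a,b$ — but that would make $v$ a suspended vertex of $K$, contrary to hypothesis. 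Hence $a,b$ are vertices of $K$, and translating the minimal-non-faces of $\wed_v(K)$ back to those of $K$ (undoing the duplication of $v$), the pair $\{a,b\}$ is still a minimal non-face of $K$ contained in or disjoint from every minimal non-face; by Lemma~\ref{lemma:DCM}, $K$ is then a suspension (with suspended pair $\{a,b\}$) or a wedge with wedged edge $\{a,b\}$, and the latter is excluded because $\{a,b\}$ is a minimal non-face. This proves part (1).

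The main obstacle is the bookkeeping in the converse directions: carefully translating the minimal non-faces of $\wed_v(K)$ or $\Sigma(K)$ back to those of $K$ and verifying that the distinguished pair $\{a,b\}$ (resp.\ $\{v_1,v_2\}$) survives this translation with the required "contained-in-or-disjoint-from" property, and in particular ruling out the degenerate overlap with the newly added vertices. The hypothesis that $v$ be non-suspended in part (1) is precisely what is needed to exclude the one bad overlap, so pinning down exactly where it is used is the delicate point; everything else is a direct application of Lemma~\ref{lemma:DCM} and the associativity/commutativity already established via \eqref{eq:J-construction of suspension}.
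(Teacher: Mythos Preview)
The paper does not give an explicit proof; it simply asserts that the proposition follows from \eqref{eq:J-construction of suspension} and the preceding discussion of minimal non-faces. Your approach via minimal non-faces and commutativity is essentially what is intended, and your argument for part~(2) is correct.

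In part~(1), however, you have the role of the ``non-suspended'' hypothesis backwards. In the \emph{forward} direction, the identity $\wed_v(\Sigma(L)) = \Sigma(\wed_v(L))$ requires $v$ to be a vertex of $L$, i.e.\ non-suspended; your claim that it holds ``for any vertex $v$'' is false --- if $v$ is one of the suspension vertices then $\wed_v(\partial I \ast L) = \partial\Delta^2 \ast L$, which is not a suspension unless $L$ already is. This is precisely where the hypothesis enters. In the \emph{converse} direction, by contrast, your disjointness argument already yields $\{a,b\}\cap\{v_1,v_2\}=\varnothing$ unconditionally (any minimal non-face of $\wed_v(K)$ containing $v_1$ must contain $v_2$, and $\{a,b\}$ has only two elements), so the ``unless $v$ itself were one of $a,b$'' clause is superfluous and a bit confused: $v$ is not even a vertex of $\wed_v(K)$. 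Finally, invoking Lemma~\ref{lemma:DCM} (stated for PL~spheres) in a proposition about arbitrary simplicial complexes is a type mismatch, and it is unnecessary: once $\{a,b\}$ is a minimal non-face disjoint from every other minimal non-face of $K$, one obtains $K = \partial\{a,b\}\ast L$ directly, since a simplicial complex is determined by its set of minimal non-faces. With these corrections your plan goes through.
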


We define the \emph{Picard number} of $K$ as $m-n$.
One can observe that the wedge operation preserves the Picard number of $K$ whereas the suspension increases the Picard number of $K$ by~$1$.
It is known that the link, wedge, and suspension operations are closed within the class of PL~spheres, see \cite{Choi-Jang-Vallee2023} for details.

A PL~sphere not isomorphic to some wedge of another PL~sphere is termed a \emph{seed}.
It should be noted that any PL~sphere $K$ of Picard number~$p$ can be written as $L(J)$, where $L$ is a seed of the same Picard number~$p$.
In addition, one can easily see that $L$ is uniquely determined up to isomorphism, whereas $J$ can be different.

For our purpose, we are interested in PL~spheres of dimension $n-1$ on $[m]$ whose real Buchstaber number coincides with their Picard number $m-n$.
Such a PL~sphere is said to be \emph{($\Z_2^n$-)colorable}.
Ewald \cite{ewald1996combinatorial} observed that all colorable PL~spheres are obtained by colorable seeds, and Choi and Park \cite{CP_wedge_2} proved that the number of colorable seeds with given Picard number is finite.
Although obtaining the list of colorable seeds of given Picard number is a difficult problem in itself, the list up to Picard number~$4$ has been established in \cite{Choi-Jang-Vallee2023}.

\begin{theorem} \cite{Choi-Jang-Vallee2023} \label{theorem: classification}
    The number of colorable seeds with Picard number at most $4$ up to isomorphism is as follows:
    \begin{center}
        \begin{tabular}{l*{12}{c}r}
        \toprule
        $p\backslash n$ &$1$ & $2$ & $3$ & $4$ & $5$ & $6$ & $7$ & $8$ & $9$ & $10$ & $11$ & $>11$& total \tabularnewline \midrule
        $1$&$1$&  &&&&&&&&&&&$1$\tabularnewline
        $2$ && $1$ & & &&&&&&&&& $1$\tabularnewline
        $3$ && $1$ & $1$& $1$& & & & &&&& &$3$\tabularnewline
        $4$ && $1$ & $4$ & $21$ & $142$ & $733$ & $1190$ & $776$ & $243$& $39$ & $4$ & &$3153$\tabularnewline
        \bottomrule
        \end{tabular}
    \end{center}
    with the empty slots displaying zero.
\end{theorem}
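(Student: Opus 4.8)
The plan is to reduce the classification to a finite enumeration of binary configurations and carry it out Picard number by Picard number, treating $p=m-n\le 2$ directly and $p=3,4$ via dual characteristic maps.

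For the small cases there is nothing to compute. If $p=1$, the only $(n-1)$-dimensional PL~sphere on $n+1$ vertices is $\partial\Delta^n$, and since $\partial\Delta^n=\wed_v(\partial\Delta^{n-1})$ for $n\ge 2$, every such sphere is a $J$-construction of $S^0=\partial\Delta^1$; as $S^0$ is not a wedge of a positive-dimensional PL~sphere it is the unique seed, giving the entry at $(p,n)=(1,1)$. If $p=2$, the classification of PL~spheres with $n+2$ vertices gives $K\cong\partial\Delta^a\ast\partial\Delta^b$ with $a+b=n$ and $a,b\ge1$, and $\partial\Delta^a\ast\partial\Delta^b=\wed_v(\partial\Delta^{a-1}\ast\partial\Delta^b)$ whenever $a\ge2$, so every such sphere is a $J$-construction of the $4$-gon $\partial\Delta^1\ast\partial\Delta^1$; the $4$-gon is a seed, giving the entry at $(p,n)=(2,2)$.

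For $p=3,4$ I would pass to dual characteristic maps. A colorable seed $K$ of Picard number $p$ that is not a suspension has, by Corollary~\ref{cor:repetition of rows} (repeated rows would force a suspended pair or a wedged edge), a dual characteristic map with pairwise distinct nonzero rows, i.e.\ an IDCM $\overline{\lambda^\R}\colon[m]\to\Z_2^p$; its rows form a spanning subset $V\subseteq\Z_2^p\setminus\{0\}$ with $|V|=m=n+p$, so $m\le 2^p-1$ and $n\le 2^p-1-p$ — precisely the range of the table ($n\le 4$ for $p=3$, $n\le 11$ for $p=4$). A colorable seed that \emph{is} a suspension is, by Proposition~\ref{proposition:suspension wedge}, the suspension of a colorable seed of Picard number $p-1$, so the full list is obtained recursively from the suspension-free ones, and everything is finite — this recovers the Choi--Park finiteness in a usable form. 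Conversely, by linear Gale duality (Proposition~\ref{proposition: linear Gale duality}) a spanning configuration $V$ forces the facets of any compatible complex to lie among the complements of the bases of $V$; so for each $V$ one enumerates the simplicial complexes supported on those facets, keeps the PL~spheres among them, and then the seeds, the seed test being the combinatorial one of Lemma~\ref{lemma:DCM} (no vertex pair contained in, or disjoint from, every minimal non-face while not being one). For $p=3$ the configurations live in $\Z_2^3\setminus\{0\}$ and the computation is short, producing the three seeds; for $p=4$ they are the subsets of $\Z_2^4\setminus\{0\}$, i.e.\ the restrictions of the specific rank-$4$ binary matroid $PG(3,2)$, whose $15$ points realise the extremal case $m=15$, $n=11$. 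Running the enumeration over these up to $GL(4,\Z_2)$ and reattaching the three suspensions of Picard-$3$ seeds yields, sorted by $n=m-4$, the counts $1,4,21,142,733,1190,776,243,39,4$.

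The hard part will be the PL~sphere test in Picard number $4$. For $p\le 3$ there is only a handful of configurations and one checks directly that the relevant complexes are polytopal, hence PL, spheres; but for $p=4$ there are configurations whose complexes are merely homology spheres, non-PL triangulated spheres, or not manifolds at all, and these must be reliably detected and discarded. I would handle this by verifying recursively that every vertex link is a PL~sphere — necessarily of Picard number $\le 3$, hence already classified — together with the pseudomanifold and homology conditions; the recursion on dimension terminates, and the bound $m\le 2^p-1$ keeps the whole search finite. The remaining difficulty is organisational: enumerating the $GL(4,\Z_2)$-orbits among the subsets of $PG(3,2)$, enumerating for each orbit the (possibly several) PL~spheres it supports, and identifying the inequivalent IDCMs a single sphere may carry, so that each seed is counted exactly once.
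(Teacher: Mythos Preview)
This theorem is not proved in the present paper; it is stated with citation to \cite{Choi-Jang-Vallee2023} and used as input. There is therefore no proof here to compare your proposal against.

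As a standalone outline your strategy is the right one --- the reduction of non-suspended colorable seeds to IDCMs, the bound $m\le 2^p-1$, and the recursive recovery of suspended seeds from Picard number $p-1$ are exactly how finiteness is obtained, and your handling of $p\le 2$ is correct. One claim, however, is wrong: vertex links in a PL~sphere of Picard number~$4$ need not have Picard number $\le 3$. The link $\lk_K(v)$ has dimension $n-2$ and can have up to $m-1$ vertices, so its Picard number can reach $(m-1)-(n-1)=m-n=4$; this occurs whenever $v$ is adjacent to every other vertex, which is not excluded. Your recursion still terminates on \emph{dimension}, so the PL test can be salvaged, but you cannot short-circuit it by invoking the $p\le 3$ classification for links.

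Beyond that slip, what you have written is a plan, not a proof. The $p=4$ enumeration you describe --- iterating over $GL(4,\Z_2)$-orbits of spanning subsets of $\Z_2^4\setminus\{0\}$, generating pure complexes whose cofacets are bases, filtering for PL~spheres and then for seeds, and deduplicating across the several IDCMs a single sphere may carry --- is precisely the substantial computational work carried out in the cited paper, and the specific counts $1,4,21,142,733,1190,776,243,39,4$ are its output. Nothing in your sketch independently certifies those numbers.
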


\section{Toric wedge induction}
\subsection{Toric wedge induction and its modification}

Let $K$ be an $(n-1)$-dimensional PL~sphere on $[m]$.
There are operations on mod~$2$ characteristic maps over $K$ corresponding to wedge and link operations on $K$.
Let $\Lambda^\R$ be a mod~$2$ characteristic map over $\wed_{v}(K)$.
Up to D-J equivalence and vertex relabeling, we may assume that
\begin{equation} \label{eq:wedged char}
 \Lambda^\R =
    \kbordermatrix{ & v_1 & v_2 & &  \\
                    & 1 & 0 & \multirow{2}{*}{$\mathds{O}$} & \textbf{a} \\
                    & 0 & 1 & & \textbf{b} \\
                    & \multicolumn{2}{c}{$\mathds{O}$} & I_{n-1} & A},
\end{equation}
where the column indexes $v_1$ and $v_2$ stand for the associated wedged vertices, $\textbf{a}$ and $\textbf{b}$ are row vectors of size $m-n$, $I_{n-1}$ is the identity matrix of size $n-1$, and $A$ is a $\Z_2$-matrix of size $(n-1)\times (m-n)$.
The \emph{projection} of $\Lambda^\R$ with respect to a face $\sigma$ of $\wed_v(K)$ is a map from the vertex set of $\lk_K(\sigma)$ to $\Z_2^{n-|\sigma|}$ defined by
\begin{equation} \label{eq: projection}
    \proj_\sigma (\Lambda^\R)(w) = [\Lambda^\R(w)] \in \faktor{\Z_2^n}{\left< \Lambda^\R(v) \mid v \in \sigma \right>} \cong \Z_2^{n-|\sigma|}
\end{equation}
for each vertex $w$ of $\lk_K(\sigma)$.
If we fix a basis of $\Z_2^{n-|\sigma|}$, we can see that $\proj_\sigma (\Lambda^\R)$ is a mod~$2$ characteristic map over $\lk_K(\sigma)$.
We call $\proj_\sigma (\Lambda^\R)$ the \emph{projection} onto $\lk_K(\sigma)$.

The links $\lk_{\wed_{v}(K)}(v_1)$ and $\lk_{\wed_{v}(K)}(v_2)$ are isomorphic to $K$ by identifying $v_2$ and $v_1$ with $v$, respectively, and the projections of $\Lambda^\R$ with respect to $v_1$ and $v_2$ are written as
$$
    \lambda^\R_1 = \proj_{v_1}(\Lambda^\R) =\kbordermatrix{ & v_2 & &  \\
                                         & 1 & \mathds{O} & \textbf{b} \\
                                         & \mathds{O} & I_{n-1} & A}, \text{ and }
$$
$$
    \lambda^\R_2 = \proj_{v_2}(\Lambda^\R) =\kbordermatrix{ & v_1 & &  \\
                                        & 1 & \mathds{O} & \textbf{a} \\
                                        & \mathds{O} & I_{n-1} & A}.
$$
If we consider the first column of each matrix corresponds to the vertex $v$, then two matrices $\lambda^\R_1$ and $\lambda^\R_2$ are mod~$2$ characteristic maps over $K$.
Hence, $\Lambda^\R$ corresponds to a choice of two mod~$2$ characteristic maps over $K$ whose first $n$ columns form an identity matrix such that their submatrices formed by deleting the first row and the first column are identical up to D-J equivalence.

Conversely, one may construct at most one $\Lambda^\R$ over $\wed_{v}(K)$ from an ordered pair of mod~$2$ characteristic maps $\lambda_1^\R$ and $\lambda_2^\R$ over $K$ as in~\eqref{eq:wedged char}.
We denote $\Lambda^\R = \lambda_1^\R \wedge_v \lambda_2^\R$ if it exists.
If $\lambda_1^\R = \lambda_2^\R = \lambda^R$, then $\lambda^\R \wedge_v \lambda^\R$ always exists for any $v$, and it is called the \emph{canonical extension} of $\lambda^\R$ at $v$.

It should be noted that we can represent each characteristic map over $K(J)$ by a combination of characteristic maps over $K$.
From this viewpoint, we shall introduce one powerful inductive tool to demonstrate some properties on real toric spaces, for example the existence of a lift of $\Lambda^\R$ as in this paper.

For a PL~sphere $K$, let $\cX$ be a collection of pairs $(L, \lambda^\R)$ such that every $L$ is expressed as $K(J)$ for some $J$, and $\lambda^\R$ is a mod~$2$ characteristic map over $L$.
Then $\cX$ is called a \emph{wedge-stable set based on $K$} if  $(\wed_v(L), \lambda_1^\R \wedge \lambda_2^\R) \in \cX$ whenever both $(L, \lambda_1^\R)$ and $(L, \lambda_2^\R)$ are in $\cX$.

We present the concept of \emph{toric wedge induction} which is a method employed to demonstrate the validity of a given property across $\cX$.

\begin{proposition}[Toric wedge induction] \label{TWI}
    For a PL~sphere $K$, let $\cX$ be a wedge-stable set based on $K$, and $\cP$ a property.
    Suppose that the following holds;
    \begin{enumerate}
      \item \textbf{Basis step:} All $(K, \lambda^\R) \in \cX$ satisfies $\cP$ .
      \item \textbf{Inductive step:} If $(L, \lambda_1^\R)$, $(L, \lambda_2^\R) \in \cX$ satisfy $\cP$, then so does $(\wed_v(L), \lambda_1^\R \wedge_v \lambda_2^\R)$ for any vertex $v$ of $L$.
    \end{enumerate}
    Then $\cP$ holds on $\cX$.
\end{proposition}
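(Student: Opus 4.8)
The plan is to prove $\cP$ for every $(L,\lambda^\R)\in\cX$ by induction on the number $|J|$ of vertices of $L$, where $L=K(J)$ and $K$ has $m$ vertices; the two hypotheses of the proposition are tailored to be exactly the base case and the inductive step. The base case $|J|=m$ is immediate: then $L\cong K$, so $(K,\lambda^\R)\in\cX$ and the Basis step gives $\cP$.

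For the inductive step, fix $(L,\lambda^\R)\in\cX$ with $L=K(J)$ and $|J|>m$, and assume $\cP$ holds for every pair of $\cX$ whose underlying complex has fewer vertices. Since $|J|>m$, at least one wedge was used to form $L$ from $K$; by the associativity and commutativity of wedge operations (see the discussion around \eqref{eq:J-construction of suspension}, with Lemma~\ref{lemma:DCM} to recognize the wedged edge) we may undo one of them and write $L=\wed_v(L')$ with $L'=K(J')$, $|J'|=|J|-1$, and wedged vertices $v_1,v_2$. By the correspondence between mod~$2$ characteristic maps over $\wed_v(L')$ and ordered pairs of such maps over $L'$, explained just before \eqref{eq: projection} and in the two displays after it, we have $\lambda^\R=\lambda_1^\R\wedge_v\lambda_2^\R$ with $\lambda_i^\R=\proj_{v_i}(\lambda^\R)$ regarded as mod~$2$ characteristic maps over $\lk_L(v_i)\cong L'$. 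Granting that $(L',\lambda_1^\R)$ and $(L',\lambda_2^\R)$ again lie in $\cX$, the induction hypothesis gives $\cP$ for both, and then the Inductive step, applied at the vertex $v$ of $L'$, yields $\cP$ for $(\wed_v(L'),\lambda_1^\R\wedge_v\lambda_2^\R)=(L,\lambda^\R)$. This closes the induction.

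The one point needing care is the clause ``granting that $(L',\lambda_i^\R)\in\cX$'': wedge-stability as defined closes $\cX$ only in the wedging direction, whereas here we pass the other way. This is harmless in the situations where the proposition is used, where $\cX$ is the set of all pairs $(K(J),\lambda^\R)$ with $K$ drawn from a fixed finite list of seeds (those of Theorem~\ref{theorem: classification}), a set manifestly stable under both wedging and projection onto wedged vertices; more generally one should read ``wedge-stable set based on $K$'' as the set generated from its $K$-level pairs $\{(K,\lambda^\R)\in\cX\}$ by iterated wedging, for which the needed decomposition with both factors in $\cX$ is built into membership. Everything else — undoing a single wedge and recovering $\lambda_1^\R,\lambda_2^\R$ from $\lambda^\R$ by the projections of \eqref{eq: projection} — is already assembled in the paragraphs preceding the statement, so no new machinery is required.
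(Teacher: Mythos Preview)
The paper does not supply a proof of Proposition~\ref{TWI}; it is stated as a principle and credited to Choi and Park~\cite{Choi-Park2016}, so there is nothing to compare your argument against line by line. Your induction on $|J|$ is the natural and intended reading, and the mechanics (undo one wedge, recover $\lambda_1^\R,\lambda_2^\R$ as the two projections $\proj_{v_i}(\lambda^\R)$, apply the hypotheses) are exactly what the setup before the proposition is designed to support.

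You are right to flag the projection-closure issue: as literally defined, ``wedge-stable'' only closes $\cX$ upward, so nothing in the hypotheses forces $(L',\lambda_i^\R)\in\cX$ after you undo a wedge. This is a genuine gap in the statement rather than in your proof, and your diagnosis is accurate: in every use the paper makes of the method (most explicitly $\cX_K$ in Proposition~\ref{MTWI2}), $\cX$ consists of \emph{all} pairs $(K(J),\lambda^\R)$, which is trivially closed under projection, so the issue never bites. Your suggested reading --- that $\cX$ should be taken as the wedge-closure of its $K$-level pairs, so that membership itself records a decomposition with factors in $\cX$ --- is the cleanest fix and makes the induction go through verbatim. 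It would be worth saying this a bit more crisply: rather than ``granting that'', state once that one assumes $\cX$ is generated from its $K$-level by wedging (equivalently, closed under the two projections at any wedged edge), and then the argument is complete without further qualification.
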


The credit of original idea of toric wedge induction should be given to Choi and Park~\cite{Choi-Park2016}.
They used it for showing the projectivity of certain toric manifolds in  ~\cite{Choi-Park2016} or \cite{CP_CP_variety}. 
Later, the authors of this paper used it for classifying toric manifolds satisfying equality within an inequality regarding the number of minimal components in their rational curve space \cite{Choi-Jang-Vallee2023}.

However, it is sometimes challenging to perform the inductive step.
In that situation, we can relax it by strengthening the basis step.
In this paper, we introduce a new, easier version of this method that helps with the toric lifting property we want to show.

In order to do it, we briefly review the notions and properties, following \cite{CP_wedge_2}, where the reader may find a much more details about the relations between characteristic maps over $K(J)$ and puzzles explained below.

The \emph{pre-diagram $D'(K)$} of $K$ is an edge-colored non-simple graph such that
\begin{enumerate}
    \item the node set of $D'(K)$ is the set of D-J classes of the mod~$2$ characteristic maps over $K$,
    \item for a vertex $v$ of $K$ and two mod~$2$ characteristic maps $\lambda^\R_1$ and $\lambda^\R_2$ over $K$, a pair $(\{\lambda^\R_1, \lambda^\R_2\}, v)$ is a colored edge of $D'(K)$ if and only if there is a mod~$2$ characteristic map over $\wed_{v}(K)$ whose two projections onto $K$ are $\lambda^\R_1$ and $\lambda^\R_2$.
\end{enumerate}

We denote $G(J)$ the $1$-skeleton of the simple polytope $P(J) \coloneqq \Delta^{j_1-1} \times \Delta^{j_2-1} \times \dots \times \Delta^{j_m-1}$.
Each edge $\bm{\epsilon}$ of $G(J)$ is uniquely written as 
\begin{align*}
    \bm{\epsilon} = \alpha_1 \times \alpha_2 \times \dots \times \alpha_{v-1} \times \epsilon_v \times \alpha_{v+1} \times \dots \times \alpha_m,
\end{align*}
where $\alpha_i$ is a vertex of $\Delta^{j_i-1}$, $1 \leq i \leq m, i \not= v$, and $\epsilon_v$ is an edge of $\Delta^{j_v-1}$.
Then color $\bm{\epsilon}$ by $v$.

Then, a mod~$2$ characteristic map $\lambda^\R$ over $K(J)$ can be expressed by an edge-colored graph homomorphism $\phi \colon G(J) \to D'(K)$;
When $\bm{\alpha}$ is a vertex of $G(J)$, we can write
\begin{align*}
  \bm{\alpha} = \alpha_1 \times \alpha_2 \times \dots \times \alpha_m,
\end{align*}
where $1 \leq \alpha_i \leq j_i$ is a vertex of $\Delta^{j_i-1}$ for $1 \leq i \leq m$.
Observe that
\begin{align*}
  \mathcal{F}(\bm{\alpha}) \coloneqq \{1_1, 1_2, \ldots, 1_{j_1}, \ldots, m_1, m_2, \ldots, m_{j_m}\} \setminus \{1_{\alpha_1}, 2_{\alpha_2}, \ldots, m_{\alpha_m} \}
\end{align*} does not contain any minimal non-face of $K(J)$, so it is a face of $K(J)$.
Since each vertex $i_{j_k}$ ($j_k \not= \alpha_i$) in $\mathcal{F}(\bm{\alpha})$ is a wedged vertex of $i_{\alpha_i}$, $\lk_{K(J)}(\mathcal{F}(\bm{\alpha}))$ is isomorphic to $K$, and the projection $\proj_{\mathcal{F}(\bm{\alpha})}(\lambda^R)$ is a mod~$2$ characteristic map over $K$ by the natural bijection between $\{1, 2, \ldots, m\}$ and $\{1_{\alpha_1}, 2_{\alpha_2}, \ldots, m_{\alpha_m}\}$.
Define $\phi$ by $\phi(\bm{\alpha})=\proj_{\mathcal{F}(\bm{\alpha})}(\lambda^\R)$ for each vertex $\bm{\alpha}$ of $G(J)$.
Let $\epsilon$ be an edge of $G(J)$.
Up to relabeling the vertices of $K$, $\epsilon$ consists of two vertices $\alpha$ and $\alpha' \coloneqq \alpha'_1 \times \alpha_2 \times \dots \times \alpha_m$.
Then $\phi(\epsilon) = \{\proj_{\mathcal{F}(\bm{\alpha})}(\lambda^\R), \proj_{\mathcal{F}(\bm{\alpha'})}(\lambda^\R) \}$.
By the following proposition, $\overline{\proj_{\mathcal{F}(\bm{\alpha})}(\lambda^\R)}$ and $\overline{\proj_{\mathcal{F}(\bm{\alpha'})}(\lambda^\R)}$ are same except their first rows.
Hence $\phi$ is an edge-colored graph homomorphism.

\begin{remark} \label{rmk:reducible}
    In the above situation, it is worthy to note that if $\overline{\lambda^\R}(1_{\alpha_1}) = \overline{\lambda^\R}(1_{\alpha'_1})$, then  $\phi$ is not irreducible.
\end{remark}

\begin{proposition} \label{proposition: dual projection}
    Let $K$ be a PL~sphere, and $\sigma$ a face of $K$ such that the Picard numbers of $K$ and $\lk_K(\sigma)$ are same.
    Up to D-J equivalence, the dual of the projection of a mod~$2$ characteristic map $\lambda^\R$ with respect to a face $\sigma$ of $K$ is obtained by removing rows corresponding to $\sigma$ in $\overline{\lambda^\R}$.
\end{proposition}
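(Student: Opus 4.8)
The plan is to reduce $\lambda^\R$ to a convenient block form by D-J equivalence (row operations) together with a relabeling of the vertex set, perform the kernel computation in that form, and read off the dual of the projection directly.

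First I would relabel so that $\sigma=\{1,\dots,k\}$ with $k=|\sigma|$. Since $\sigma$ is a face of $K$, the columns $\lambda^\R(1),\dots,\lambda^\R(k)$ are linearly independent over $\Z_2$, so after row operations (i.e.\ up to D-J equivalence) we may assume
$$
\lambda^\R=\begin{bmatrix} I_k & B \\ \mathds{O} & C\end{bmatrix},
$$
with $B$ of size $k\times(m-k)$ and $C$ of size $(n-k)\times(m-k)$ over $\Z_2$. Taking the basis of $\Z_2^n/\langle\lambda^\R(v)\mid v\in\sigma\rangle\cong\Z_2^{\,n-k}$ induced by the last $n-k$ coordinates, the definition \eqref{eq: projection} shows that $C$ is exactly a D-J representative of $\proj_\sigma(\lambda^\R)$. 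This is where the hypothesis that $K$ and $\lk_K(\sigma)$ have the same Picard number enters: it forces $\lk_K(\sigma)$ to have precisely $m-k$ vertices, namely all of $[m]\setminus\sigma$, so the columns of $C$ are indexed exactly by the vertex set of $\lk_K(\sigma)$ and $C$ has no extra columns beyond those.

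Next I would compute $\ker\lambda^\R\subseteq\Z_2^m$. Writing a vector as $(x,y)$ with $x\in\Z_2^k$ and $y\in\Z_2^{\,m-k}$, the equation $\lambda^\R(x,y)^\top=0$ amounts to $x=By$ and $Cy=0$ (using $-B=B$ over $\Z_2$); hence $\ker\lambda^\R=\{(By,y)\mid y\in\ker C\}$. Choosing any matrix $\overline{C}$ whose columns form a basis of $\ker C$, i.e.\ any representative of $\overline{\proj_\sigma(\lambda^\R)}$, the matrix
$$
\overline{\lambda^\R}=\begin{bmatrix} B\,\overline{C} \\ \overline{C}\end{bmatrix}
$$
has columns forming a basis of $\ker\lambda^\R$ (they are independent since the columns of $\overline{C}$ are, and there are $m-n$ of them), so it is a legitimate choice of dual characteristic map for $\lambda^\R$. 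Deleting its first $k$ rows, namely those indexed by $\sigma$, returns exactly $\overline{C}=\overline{\proj_\sigma(\lambda^\R)}$.

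Finally, since $\overline{\lambda^\R}$ and $\overline{\proj_\sigma(\lambda^\R)}$ are each only defined up to D-J equivalence (column operations), exhibiting representatives related as above is all that is needed, and the statement follows. I do not expect a serious obstacle here; the only points requiring care are that the block reduction be compatible with the identification of the quotient in \eqref{eq: projection}, that the $\Z_2$-arithmetic in the kernel computation be handled correctly, and --- the one genuinely indispensable hypothesis --- that equality of Picard numbers makes the row counts match, so that removing the $|\sigma|$ rows of $\overline{\lambda^\R}$ leaves precisely the $m-|\sigma|$ rows indexed by the vertices of $\lk_K(\sigma)$.
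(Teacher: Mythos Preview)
Your argument is correct and follows essentially the same approach as the paper's proof: row-reduce $\lambda^\R$ so that the projection becomes a visible submatrix, then verify that deleting the rows of $\overline{\lambda^\R}$ indexed by $\sigma$ yields a basis for the kernel of that submatrix. The only cosmetic difference is that the paper reduces to the case $|\sigma|=1$ and checks full rank via the existence of a cofacet avoiding $v$, whereas you treat the whole face at once via the block form and obtain full rank directly from the explicit kernel description.
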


\begin{proof}
    It is sufficient to prove the result for a vertex $\sigma = \{v\}$.
    To project $\lambda^\R$ with respect to $\{v\}$, let us left multiply $\lambda^\R$ by an invertible matrix $g$ so that the vector $g\lambda^\R(v)$ has a single nonzero entry, say at index $k$.
    From \eqref{eq: projection}, if we delete the $v$th column and $k$th row of $g\lambda^\R$, then we obtain the projection $\proj_{v}(\lambda^\R)$.
    Let $\overline{\mu^\R}$ be the matrix obtained by removing $v$th row in $\overline{\lambda^\R}$.
    Then, any column of $\overline{\mu^\R}$ is an element of the kernel of $\proj_{v}(\lambda^\R)$ since every component of the $v$th column of $\lambda^\R$ except the $k$th one is $0$.
    Note that $\overline{\mu^\R}$ remains of full rank since there must exist a cofacet of $K$ that does not contain $v$.
\end{proof}

However, not all edge-colored graph homomorphism $\phi$ is obtained from a mod~$2$ characteristic map over $K(J)$.
If it is, $\phi$ is called a \emph{(realizable) puzzle} over $K(J)$ (or, over $K$, if there is no confusion), and $\lambda_\phi^\R$ denotes the corresponding mod~$2$ characteristic map over $K(J)$.
In particular, a puzzle which does not contain any edge corresponding to a canonical extension is called \emph{irreducible}.

Consider a realizable puzzle $\phi$ over $K(J)$.
In $G(J)$, two edges \begin{align*} \bm{\epsilon} &= \alpha_1 \times \alpha_2 \times \dots \times \alpha_{v-1} \times \epsilon_v \times \alpha_{v+1} \times \dots \times \alpha_m, \\
                                      \bm{\epsilon}' &= \alpha'_1 \times \alpha'_2 \times \dots \times \alpha'_{v-1} \times \epsilon'_v \times \alpha'_{v+1} \times \dots \times \alpha'_m  \end{align*}
are called \emph{parallel} if $\epsilon_v = \epsilon'_v$.
By \cite[Corollary~4.4]{CP_wedge_2}, if there is an edge $\bm{\epsilon}$ of $\phi$  corresponding to a canonical extension, then so does every parallel edge to $\bm{\epsilon}$.
Therefore, every puzzle is obtainable from an irreducible puzzle by a sequence of canonical extensions.

\begin{proposition}[Modified toric wedge induction] \label{MTWI}
    For a PL~sphere $K$, let $\cX$ be a wedge-stable set based on $K$, and $\cP$ a property.
    Suppose that the following holds;
    \begin{enumerate}
      \item \textbf{Basis step:} For any positive integer tuple $J$ and any irreducible realizable puzzle $\phi$ over $K(J)$, $(K(J),\lambda_\phi^\R)$ satisfies $\cP$.
      \item \textbf{Inductive step:} If $(L, \lambda^\R) \in \cX$ satisfies $\cP$, then so does the pair consisting of the wedge of $L$ at $v$ and the canonical extension of $\lambda^\R$ at $v$ for any vertex $v$ of $L$.
    \end{enumerate}
    Then $\cP$ holds on $\cX$.
\end{proposition}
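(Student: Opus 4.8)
The plan is to prove Proposition~\ref{MTWI} by strong induction on the number of wedge operations $w(J) \coloneqq \sum_{i=1}^{m}(j_i - 1)$ needed to build $K(J)$ from the seed $K$. The starting point is that every pair $(L, \lambda^\R) \in \cX$ has the form $(K(J), \lambda_\phi^\R)$ for some positive integer tuple $J$ and some realizable puzzle $\phi$ over $K(J)$, so it suffices to establish $\cP$ for all such pairs, organized by increasing $w(J)$.

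For the base of the induction — and, more generally, whenever the puzzle $\phi$ attached to $(K(J), \lambda_\phi^\R)$ is irreducible — I would invoke the basis step directly. When $w(J) = 0$ the graph $G(J)$ is a single vertex with no edges, so it carries no edge corresponding to a canonical extension and $\phi$ is vacuously irreducible; this disposes of the pairs over $K$ itself. Thus the only real work lies in the reducible case.

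For the inductive step, suppose $\phi$ is reducible, so it contains an edge corresponding to a canonical extension, colored by a wedged vertex of some $i \in [m]$. By the parallel-edge statement of \cite{CP_wedge_2} recalled above (Corollary~4.4 there), every edge parallel to it is also a canonical extension; equivalently, $\phi$ is the canonical extension at some vertex $v$ of a realizable puzzle $\phi'$ over $K(J')$, where $J'$ is $J$ with its $i$-th coordinate lowered by one. Then $w(J') = w(J) - 1$, $K(J) = \wed_v(K(J'))$, and $\lambda_\phi^\R = \lambda_{\phi'}^\R \wedge_v \lambda_{\phi'}^\R$ is the canonical extension of $\lambda_{\phi'}^\R$. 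Since $(K(J'), \lambda_{\phi'}^\R)$ again lies in $\cX$, the induction hypothesis applies to it, and the inductive step of the proposition then yields $\cP$ for $(\wed_v(K(J')), \lambda_{\phi'}^\R \wedge_v \lambda_{\phi'}^\R) = (L, \lambda^\R)$. Equivalently, one can unwind a reducible $\phi$ all the way down to an irreducible puzzle through a chain of canonical extensions and then climb back up one wedge at a time.

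I expect the crux of the argument to be the reduction in the reducible case — the assertion that a reducible puzzle really is the canonical extension of a realizable puzzle on a one-step-smaller wedge — which rests entirely on the puzzle machinery and the parallel-edge corollary imported from \cite{CP_wedge_2}; everything else is induction bookkeeping. The one point I would be careful about is checking that the peeled-off pair $(K(J'), \lambda_{\phi'}^\R)$ is itself in $\cX$, which is immediate for the wedge-stable sets arising in our applications (the full collection of mod~$2$ characteristic maps over all $J$-constructions of a fixed colorable seed). The gain of this scheme over Proposition~\ref{TWI} is that its inductive step must be checked only for canonical extensions $\lambda^\R \wedge_v \lambda^\R$ rather than for arbitrary wedges $\lambda_1^\R \wedge_v \lambda_2^\R$; the trade-off is the stronger basis step, which now ranges over all irreducible realizable puzzles over all $K(J)$.
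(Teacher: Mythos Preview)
Your proposal is correct and follows the same route as the paper. The paper does not isolate a formal proof of Proposition~\ref{MTWI}; its argument is the single sentence immediately preceding the statement, namely that by \cite[Corollary~4.4]{CP_wedge_2} every realizable puzzle is obtained from an irreducible one by a sequence of canonical extensions, which is precisely the reduction you carry out by induction on $w(J)$. Your careful flag about whether the peeled-off pair $(K(J'),\lambda_{\phi'}^\R)$ lies in $\cX$ is apt: the paper never addresses this, and indeed the proposition as literally stated is only cleanly justified when $\cX$ is downward-closed under projection (as it is for the full $\cX_K$ of Proposition~\ref{MTWI2}, the only case actually used).
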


It should be noted that the basis step consists of finitely many cases as the lemma below.

\begin{lemma} \label{lemma:finite irreducible}
    For a simplicial complex $K$, there are finitely many irreducible puzzles over $K$.
\end{lemma}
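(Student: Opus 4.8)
The plan is to reduce the count of irreducible puzzles over $K$ to a count of edge-colored graph homomorphisms between finite graphs. First I would record that the target, the pre-diagram $D'(K)$, has only finitely many nodes: a mod~$2$ characteristic map over $K$ is an $n\times m$ matrix over $\Z_2$, of which there are at most $2^{nm}$, so there are finitely many D-J classes; moreover $D'(K)$ carries only finitely many colors and at most one edge per color and per (unordered) pair of nodes, so $D'(K)$ is a finite edge-colored graph. (If $K$ admits no mod~$2$ characteristic map the statement is vacuous, and for the applications in this paper $K$ is a PL~sphere, so I will freely use the results of this section.)

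The heart of the argument is a bound on the tuple $J$. I claim that if $\phi$ is an irreducible realizable puzzle over $K(J)$, then $j_i\le 2^{m-n}$ for every $i\in[m]$. Let $\lambda^\R_\phi$ be the mod~$2$ characteristic map over $K(J)$ associated to $\phi$, and $\overline{\lambda^\R_\phi}$ its dual; since the wedge operation preserves the Picard number, $\overline{\lambda^\R_\phi}$ takes values in $\Z_2^{m-n}$. I would show that the vectors $\overline{\lambda^\R_\phi}(i_1),\dots,\overline{\lambda^\R_\phi}(i_{j_i})$ attached to the copies $i_1,\dots,i_{j_i}$ of the vertex $i$ are pairwise distinct. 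Suppose $\overline{\lambda^\R_\phi}(i_k)=\overline{\lambda^\R_\phi}(i_l)$ with $k\neq l$. Fixing a vertex $\alpha_{i'}$ of $\Delta^{j_{i'}-1}$ for each $i'\neq i$ and taking the edge of $\Delta^{j_i-1}$ joining its vertices $k$ and $l$ produces an edge $\bm{\epsilon}$ of $G(J)$ of color $i$ whose two endpoints have $i$-th coordinates $k$ and $l$; then Remark~\ref{rmk:reducible} (applied after relabeling so that $i$ plays the role of the first coordinate) forces $\phi$ to be reducible, a contradiction. This is consistent with Corollary~\ref{cor:repetition of rows}: if two such rows were equal, $\{i_k,i_l\}$ would be a wedged edge of $K(J)$. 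Since the $j_i$ vectors lie in $\Z_2^{m-n}$ and are pairwise distinct, $j_i\le 2^{m-n}$.

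Finally I would assemble the two steps. By the bound just obtained, only finitely many tuples $J$ can support an irreducible puzzle over $K(J)$. For each such $J$, the graph $G(J)$ is the $1$-skeleton of the finite polytope $P(J)$, hence has finitely many vertices, so there are only finitely many edge-colored graph homomorphisms $G(J)\to D'(K)$ (each is determined by its values on the finitely many vertices of $G(J)$), and in particular only finitely many irreducible realizable puzzles over $K(J)$. Summing over the finite list of admissible $J$ gives the lemma. The only genuinely nontrivial point is the middle step: deducing from irreducibility that the dual values of the copies of each vertex are pairwise distinct. Everything else is finiteness bookkeeping, and the key input is Remark~\ref{rmk:reducible} once the dictionary between puzzles over $K(J)$ and mod~$2$ characteristic maps over $K(J)$ is unwound.
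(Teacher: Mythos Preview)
Your proof is correct and shares the paper's overall strategy: bound each $j_i$ for an irreducible puzzle, then count. The paper's argument is more direct and does not pass through the dual. It fixes a vertex $\bm{\alpha}$ of $P(J)$, notes that the $j_v$ vertices of the simplex factor $\alpha_1\times\cdots\times\Delta^{j_v-1}\times\cdots\times\alpha_m$ are pairwise adjacent in $G(J)$, and uses that an irreducible puzzle sends no edge to a canonical-extension loop in $D'(K)$; hence these $j_v$ vertices land on distinct nodes of $D'(K)$, giving $j_v\le |D'(K)|$. You instead invoke Remark~\ref{rmk:reducible} on the dual side to show that $\overline{\lambda^\R_\phi}$ restricted to the copies $i_1,\dots,i_{j_i}$ is injective, yielding the sharper bound $j_i\le 2^{m-n}$. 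The two arguments rest on the same fact---irreducibility forbids two copies of a vertex from collapsing---but the paper reads it off the graph homomorphism and you read it off the dual characteristic map; your bound is tighter (and closer in spirit to the explicit bound for quasi-IDCMs recorded after Proposition~\ref{MTWI2}), while the paper's is a one-line observation once $D'(K)$ is known to be finite.
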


\begin{proof}
    Let $J=(j_1, j_2, \ldots, j_m)$.
    Fix a vertex $\bm{\alpha}=\alpha_1 \times \alpha_2 \times \dots \times \alpha_m$ of $P(J)$.
    For each $j_v$, $\bm{\alpha}$ is also a vertex of the simplex $\alpha_1 \times \alpha_2 \times \dots \times \alpha_{v-1} \times \Delta^{j_v-1} \times \alpha_{v+1} \times \dots \times \alpha_m$.
    Because any two vertices of $\Delta^{j_v-1}$ forms an edge, the number $j_v$ can not exceed the number of mod~$2$ characteristic maps over $K$.
\end{proof}

\begin{remark}
    The concept of wedge of characteristic maps and puzzle is not only described for mod~$2$ characteristic maps, but also for characteristic maps. See \cite{CP_wedge_2} for details.
    This means that we can apply toric wedge induction to a collection of toric spaces.
    In addition, the number of PL~spheres which admit a characteristic map is less than or equal to the number of PL~spheres which admit a mod~$2$ characteristic map, so it is finite.
    However, the number of characteristic maps over a seed is not finite.
    Hence the basis step may not be implemented by direct computations in finite steps.
\end{remark}

\subsection{Modified Toric wedge induction in terms of dual characteristic maps}
Even if the modified version (Proposition~\ref{MTWI}) of the toric wedge induction has an easier inductive step than the original version (Proposition~\ref{TWI}), there still remains a challenging part to deal with: constructing (irreducible) puzzles.
In this subsection, we characterize irreducible puzzles in terms of dual characteristic maps over seeds, and restate the modified toric wedge induction based on a seed using dual characteristic maps instead of irreducible puzzles.

Let $K$ be a colorable seed of dimension $n-1$ on $[m]$, $\lambda^\R$ a mod~$2$ characteristic map over $K$.
Assume that there are two vertices of $K$ such that $\overline{\lambda^\R}(v) = \overline{\lambda^\R}(w)$.
By Remark~\ref{cor:repetition of rows}, $K$ has to be a suspension, and then by Proposition~\ref{proposition:suspension wedge}, $K$ is the suspension of a seed.
Let us call a seed that is a suspension a \emph{suspended seed}.
Hence if $K$ is a non-suspended seed, then every dual characteristic map over $K$ is injective.
An \emph{IDCM} denotes an injective mod~$2$ dual characteristic map.
By the following lemma, every irreducible puzzle over a non-suspended seed corresponds to an IDCM.

\begin{lemma} \label{lemma:irreducible IDCM}
    Let $\phi$ be an irreducible puzzle over $K(J)$ for a PL~sphere $K$ on $[m]$ and a positive integer $m$-tuple $J$.
    Then $\overline{\lambda_\phi^\R}$ is injective if and only if $\overline{\phi(\bm{\alpha})}$ is injective for any vertex $\bm{\alpha}$ of $G(J)$.
\end{lemma}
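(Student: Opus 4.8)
The plan is to prove both implications by unwinding the definitions, using Proposition~\ref{proposition: dual projection} and Remark~\ref{rmk:reducible}. The forward direction is the easy one: if $\overline{\lambda_\phi^\R}$ is injective, then for any vertex $\bm{\alpha}$ of $G(J)$, the map $\phi(\bm{\alpha}) = \proj_{\cF(\bm{\alpha})}(\lambda_\phi^\R)$ has dual obtained, up to D-J equivalence, by deleting the rows of $\overline{\lambda_\phi^\R}$ indexed by the face $\cF(\bm{\alpha})$ (Proposition~\ref{proposition: dual projection}, applied iteratively over the vertices of $\cF(\bm{\alpha})$, which is legitimate since $K(J)$ and $K$ share Picard number). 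Deleting rows from an injective map leaves an injective map, so $\overline{\phi(\bm{\alpha})}$ is injective.

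The converse is where the work lies, and it is where the irreducibility hypothesis is essential. Suppose $\overline{\phi(\bm{\alpha})}$ is injective for every vertex $\bm{\alpha}$ of $G(J)$, but $\overline{\lambda_\phi^\R}$ is not; then there are two distinct vertices $p, q$ of $K(J)$ with $\overline{\lambda_\phi^\R}(p) = \overline{\lambda_\phi^\R}(q)$. Since each $\overline{\phi(\bm{\alpha})}$ is injective and arises by deleting the rows indexed by $\cF(\bm{\alpha})$ from $\overline{\lambda_\phi^\R}$, at least one of $p, q$ must lie in $\cF(\bm{\alpha})$ for every vertex $\bm{\alpha}$ — that is, every vertex of $G(J)$ corresponds to a face of $K(J)$ containing $p$ or $q$. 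First I would rule out the case where $p$ and $q$ are wedged vertices of distinct original vertices $i \ne i'$ of $K$: then one can choose $\bm{\alpha}$ so that $\cF(\bm{\alpha})$ avoids both (pick the coordinates $\alpha_i$ and $\alpha_{i'}$ to hit $p$ and $q$ respectively, recalling $\cF(\bm{\alpha})$ consists of the non-selected copies), a contradiction. Hence $p$ and $q$ are both wedged vertices of a single original vertex $i$, say $p = i_{\alpha_1}$ and $q = i_{\alpha'_1}$ in the notation preceding Remark~\ref{rmk:reducible} (after relabeling so that $i = 1$). But then $\overline{\lambda_\phi^\R}(1_{\alpha_1}) = \overline{\lambda_\phi^\R}(1_{\alpha'_1})$, and Remark~\ref{rmk:reducible} says precisely that $\phi$ is reducible — contradicting the hypothesis.

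The main obstacle is the case analysis in the converse: one must argue carefully that a coincidence of two rows of $\overline{\lambda_\phi^\R}$ survives into some projection $\overline{\phi(\bm{\alpha})}$ unless the two offending vertices are wedged copies of the same seed vertex, and for this the combinatorial description of $\cF(\bm{\alpha})$ as the complement of a transversal $\{1_{\alpha_1}, \ldots, m_{\alpha_m}\}$ is the key tool. Once the two coincident vertices are forced to be copies of a single vertex, Remark~\ref{rmk:reducible} closes the argument immediately. I would present the converse in contrapositive form: assuming $\phi$ irreducible and $\overline{\lambda_\phi^\R}$ non-injective leads to a contradiction, so an irreducible $\phi$ with all $\overline{\phi(\bm{\alpha})}$ injective must have $\overline{\lambda_\phi^\R}$ injective.
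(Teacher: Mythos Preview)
Your proposal is correct and follows essentially the same route as the paper: the forward direction uses Proposition~\ref{proposition: dual projection} to see that each $\overline{\phi(\bm{\alpha})}$ is a row-deletion of $\overline{\lambda_\phi^\R}$, and the converse combines the transversal description of $\cF(\bm{\alpha})$ with Remark~\ref{rmk:reducible}. The only cosmetic difference is that the paper runs the dichotomy in the opposite order (first using irreducibility via Remark~\ref{rmk:reducible} to force $v,w$ to lie over distinct vertices of $K$, then exhibiting a bad $\bm{\alpha}$), but the logical content is identical.
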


\begin{proof}
    Assume that $\overline{\lambda_\phi^\R}$ is injective.
    By Proposition~\ref{proposition: dual projection}, each projection $\overline{\phi(\bm{\alpha})}$ of $\lambda_\phi^\R$ has no repeated rows, so it is injective.

    Conversely, suppose that $\overline{\lambda_\phi^\R}(v) = \overline{\lambda_\phi^\R}(w)$ for some vertices $v$, $w$ of $K(J)$.
    By Remark~\ref{rmk:reducible} and the irreducibility of $\phi$, $\{v, w\}$ cannot be copies of one vertex of $K$ by wedge operations.
    Hence there is a vertex $\bm{\alpha}$ of $G(J)$ such that $v = k_{\alpha_k}$ and $w = l_{\alpha_{l}}$ for some $\alpha_k \leq j_k$, $\alpha_l \leq j_l$, and distinct vertices $k, l \in [m]$ and .
    This yields $\overline{\proj_{\cF(\bm{\alpha})}(\lambda_\phi^\R)}(v) = \overline{\proj_{\cF(\bm{\alpha})}(\lambda_\phi^\R)}(w)$, so $\overline{\phi(\alpha)}$ is not injective.
\end{proof}

In general, a seed $K$ is of the form $\partial I_1 \ast \dots \ast \partial I_q \ast L$, where $I_k$ is the $1$-simplex with vertices $2k-1$ and $2k$ for each $k \leq q$, and $L$ is a non-suspended seed.
By \eqref{eq:J-construction of suspension}, \begin{equation}\label{eq:J-construction}
                                                     K(J) = \partial I_1(J_1) \ast \dots \ast \partial I_q(J_q) \ast L(J_{q+1}).
                                                   \end{equation}
                                                                                                   
Before studying mod~$2$ characteristic maps over $K(J)$, we need the following analysis of mod~$2$ characteristic maps over the join of two simplicial complexes.
Research on (integral) characteristic maps over the join $K*L$ is well-established in references such as \cite{Choi-Park2016IJM} and \cite{Dobrinskaya2001},  and it can be converted well to mod~$2$ characteristic maps. 
Refer to these for further information.
Consider the join of simplicial complexes $K_1$ of dimension $n_1-1$ with $m_1$ vertices and $K_2$ of dimension $n_2-1$ with $m_2$ vertices.
One can observe that any (mod~$2$) characteristic map $\lambda^\R$ over $K_1 \ast K_2$ has the following form;
$$
  \lambda^\R = \begin{bmatrix} \lambda^\R_{11}  &  \lambda^\R_{12} \\
                       \lambda^\R_{21} &  \lambda^\R_{22}                                                                \end{bmatrix},
$$
where $\lambda^\R_{11}$, $\lambda^\R_{22}$ are mod~$2$ characteristic maps over $K_1$, $K_2$ respectively, see \cite[Lemma~3.1]{Choi-Park2016IJM} for instance.
Moreover, we can assume that the first $n_1$ columns of $\lambda^\R_{11}$, and the first $n_2$ columns of $\lambda^\R_{22}$ form identity matrices by D-J equivalence and vertex relabeling.
Then, up to D-J equivalence,
\begin{equation} \label{eq:joining representative}
    \lambda^\R = \begin{bmatrix}  I_{n_1} & A  & \mathds{O} & B \\
                                    \mathds{O} & C & I_{n_2} & D \end{bmatrix}
    \text{ and }
    \overline{\lambda^\R} = \begin{bmatrix}
                                     A & B \\
                                     I_{m_1-n_1} & \mathds{O} \\
                                     C & D \\
                                     \mathds{O} & I_{m_2-n_2} 
                                   \end{bmatrix}.
\end{equation}
We call this form of $\lambda^\R$ the \emph{joining representative} of $\lambda^\R$.
It should be noted that $\begin{bmatrix}
                           A \\
                           I_{m_1 - n_1}  
                         \end{bmatrix}$ and $\begin{bmatrix}
                                               D \\
                                               I_{m_2 - n_1} 
                                             \end{bmatrix}$ are mod~$2$ dual characteristic maps over $K_1$ and $K_2$, respectively.

\begin{lemma} \label{lemma:non-injectivity}
    Let $K$ be a seed and $\phi$ an irreducible puzzle over its $J$-construction~\eqref{eq:J-construction}.
    Suppose that there are two vertices $v$ and $w$ of $K(J)$ such that $\overline{\lambda_\phi^\R}(v) = \overline{\lambda_\phi^\R}(w)$.
    Then there exists a suspended pair $\{2k-1, 2k\}$ of $K$ such that $v = (2k-1)_{s}$, $w = (2k)_{t}$ for some $s \leq j_{2k-1}$ and $t \leq j_{2k}$.
\end{lemma}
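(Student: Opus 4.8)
The plan is to reduce the claim to the structural analysis of dual characteristic maps over the join decomposition~\eqref{eq:J-construction}. First I would recall, using Lemma~\ref{lemma:irreducible IDCM}, that since $\overline{\lambda_\phi^\R}$ is not injective (it has the repeated rows at $v$ and $w$), there must be a vertex $\bm{\alpha}$ of $G(J)$ for which $\overline{\phi(\bm{\alpha})}$ is not injective; moreover, by Remark~\ref{rmk:reducible} and the irreducibility of $\phi$, the vertices $v$ and $w$ cannot both be wedged copies of a single vertex of $K$, so after passing to such an $\bm{\alpha}$ we may assume $\overline{\phi(\bm{\alpha})}$ is an honest non-injective mod~$2$ dual characteristic map over the seed $K$ itself, with the two offending rows indexed by two distinct vertices $a, b$ of $K$ (where $a$ is the $K$-vertex underlying $v$ and $b$ the one underlying $w$). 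By Corollary~\ref{cor:repetition of rows}, $\{a,b\}$ is then a suspended pair or a wedged edge of $K$. Since $K$ is a seed, it is not a wedge, so Lemma~\ref{lemma:DCM} forces $\{a,b\}$ to be a suspended pair; writing $K = \partial I_1 * \cdots * \partial I_q * L$ with $L$ non-suspended as in the excerpt, every suspended pair of $K$ is one of the $\{2k-1, 2k\}$, which gives the desired conclusion that $a = 2k-1$ and $b = 2k$ for some $k \le q$, hence $v = (2k-1)_s$ and $w = (2k)_t$ for the appropriate wedge indices $s \le j_{2k-1}$, $t \le j_{2k}$.

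The one gap in the outline above is the assertion that the two repeated rows in $\overline{\phi(\bm{\alpha})}$ genuinely come from two \emph{distinct} vertices of $K$: a priori the non-injectivity detected by Lemma~\ref{lemma:irreducible IDCM} could sit on two wedged copies of the same $K$-vertex within $K(J)$. Here I would argue as follows. The vertices $v$ and $w$ of $K(J)$ with $\overline{\lambda_\phi^\R}(v) = \overline{\lambda_\phi^\R}(w)$ are, by Remark~\ref{rmk:reducible} combined with irreducibility, not wedged copies of a common vertex, so they lie over distinct vertices $a \neq b$ of $K$. Now choose $\bm{\alpha}$ of $G(J)$ so that $v = a_{\alpha_a}$ and $w = b_{\alpha_b}$ simultaneously (possible precisely because $a \neq b$, so the constraints on the $a$-th and $b$-th coordinates of $\bm{\alpha}$ are independent). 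Then Proposition~\ref{proposition: dual projection} gives $\overline{\proj_{\cF(\bm{\alpha})}(\lambda_\phi^\R)}(a) = \overline{\lambda_\phi^\R}(v) = \overline{\lambda_\phi^\R}(w) = \overline{\proj_{\cF(\bm{\alpha})}(\lambda_\phi^\R)}(b)$ under the identification of $[m]$ with $\{1_{\alpha_1}, \ldots, m_{\alpha_m}\}$, so $\overline{\phi(\bm{\alpha})}$ is a non-injective dual characteristic map over $K$ with the two equal rows sitting on the distinct vertices $a$ and $b$, as required.

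The main obstacle is thus the coordination step in the previous paragraph: extracting a \emph{single} vertex $\bm{\alpha}$ of $G(J)$ whose associated $K$-projection simultaneously sees $v$ and $w$ in their ``base'' positions. This is exactly the point where the hypothesis that $\{v, w\}$ are not wedge-copies of one vertex — supplied by irreducibility via Remark~\ref{rmk:reducible} — is indispensable, since otherwise no such $\bm{\alpha}$ exists. Once this is in hand, the rest is a direct invocation of Corollary~\ref{cor:repetition of rows}, the seed hypothesis (no wedge, via Proposition~\ref{proposition:suspension wedge} and Lemma~\ref{lemma:DCM}), and the fact that the suspended pairs of a seed $K = \partial I_1 * \cdots * \partial I_q * L$ with $L$ non-suspended are precisely the pairs $\{2k-1,2k\}$.
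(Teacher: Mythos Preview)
Your argument is correct and follows essentially the same route as the paper: use irreducibility via Remark~\ref{rmk:reducible} to ensure that $v$ and $w$ lie over distinct vertices $a \neq b$ of $K$, choose a vertex $\bm{\alpha}$ of $G(J)$ hitting both, and apply Proposition~\ref{proposition: dual projection} to produce a non-injective mod~$2$ dual characteristic map over $K$ itself. The only difference is in the endgame. The paper first applies Corollary~\ref{cor:repetition of rows} in $K(J)$, splits into the suspended-pair and wedged-edge cases there, and in the problematic subcase $a,b \in L$ uses the joining representative~\eqref{eq:joining representative} to extract a non-injective dual characteristic map over the non-suspended seed $L$, reaching a contradiction. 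You instead apply Corollary~\ref{cor:repetition of rows} directly in $K$ and observe that a seed has no wedged edge, so $\{a,b\}$ must be a suspended pair of $K$, hence one of the $\{2k-1,2k\}$. Your variant is slightly more streamlined and avoids the joining-representative machinery; the one point worth a line of justification is the assertion that every suspended pair of $\partial I_1 * \cdots * \partial I_q * L$ with $L$ non-suspended is one of the $I_k$, which follows immediately from the minimal-non-face description of joins given just before Lemma~\ref{lemma:DCM}.
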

    
\begin{proof}
    By Remark~\ref{cor:repetition of rows}, $\{v, w\}$ is a suspended pair or a wedged edge of $K(J)$.
    In the former case, we are done.
    
    Let $\{v, w\}$ be a wedged edge of $K(J)$.
    As written in the proof of Lemma~\ref{lemma:irreducible IDCM}, there are two distinct vertices $l$ and $l'$ of $K$ such that $v = l_{s}$, $w = l'_{t}$ for some $s \leq j_l$ and $t \leq j_{l'}$.
    By commutativity of wedge and join operations, $\{v, w\}$ is a wedged edge of $L(J_{q+1})$ or $\partial I_k (J_k)$ for some $k$.
    If $l$ and $l'$ are contained in some $\partial I_k (J_k)$, then they form a suspended pair since they are distinct vertices.
    
    Assume that $l$ and $l'$ are contained in $L$.
    Let $\bm{\alpha}$ be a vertex of $G(J)$ such that its $l$th and $l'$th components are $s$ and $t$, respectively.
    Then $\phi(\bm{\alpha})$ is a mod~$2$ characteristic map over $K$ such that $\overline{\phi(\bm{\alpha})}(l) = \overline{\phi(\bm{\alpha})}(l')$.
    Consider the joining representative \eqref{eq:joining representative} of $\phi(\bm{\alpha})$ over $K$ by setting $K_1 = \partial I_1 \ast \dots \ast \partial I_q$ and $K_2 = L$.
    Then the mod~$2$ dual characteristic map $\begin{bmatrix}
                 D \\
                 I_{m_2-n_2} 
               \end{bmatrix}$ over the non-suspended seed $L$ has repeated two rows at $l$ and $l'$, which contradicts that a non-suspended seed only admits IDCMs.
\end{proof}

Suppose that $\phi$ is an irreducible puzzle over $K(J)$, and there are two vertices $v$ and $w$ of $K(J)$ such that $\overline{\lambda^\R_\phi}(v) = \overline{\lambda^\R_\phi}(w)$.
By Lemma~\ref{lemma:non-injectivity}, there is a suspended pair of $\{2k-1, 2k\}$ of $K$ such that $v=(2k-1)_s$ and $w=(2k)_t$.
If $\{v, w\}$ is not a suspended pair, that is $J_k \not=(1,1)$, then it is a wedged edge of $K(J)$ by Corollary~\ref{cor:repetition of rows}.
Without loss of generality, we can assume that $j_{2k-1} \geq 2$.
The irreducibility of $\phi$ ensures $\overline{\lambda^\R_\phi}((2k-1)_1) \not= \overline{\lambda^\R_\phi}((2k-1)_2)$.

Consider a positive integer tuple $J' = (j'_1, \dots, j'_m)$ such that $j'_{2k-1} = 1$, $j'_{2k}  = j_{2k-1}+j_{2k}-1$, and $j'_i = j_i$ for $i \not=2k-1, 2k$.
Define a simplicial map $f \colon K(J) \to K(J')$ by \begin{equation}
                                                       f(x) = \begin{cases}
                                                             (2k)_{j_{2k}+l-1}, & \mbox{if } x = (2k-1)_l \mbox{ for } l>1 \\
                                                             x, & \mbox{otherwise}.
                                                           \end{cases}
                                                     \end{equation}
Since any J-constructions of $\partial I_k$ is a simplex, $f$ is an isomorphism.
Then $\lambda^\R_\phi \circ f^{-1}$ is a mod~$2$ characteristic map over $K(J')$.
Note that this does not correspond to an irreducible puzzle over $K(J')$.
Hence we obtain the following theorem.

\begin{theorem}
  Let $K$ be a seed, and $\phi$ an irreducible puzzle over $K(J)$.
  Suppose that there are two vertices $v$ and $w$ of $K(J)$ such that $\overline{\lambda_\phi^\R}(v) = \overline{\lambda_\phi^\R}(w)$.
  If $\{v, w\}$ is not a suspended pair, then there exists an isomorphism $f \colon K(J') \to K(J)$ for a positive integer tuple $J'$ such that $\lambda_\phi \circ f$ does not correspond to an irreducible puzzle over $K(J')$.
\end{theorem}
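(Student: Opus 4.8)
The plan is to turn the discussion preceding the statement into a proof resting on Lemma~\ref{lemma:non-injectivity}, Corollary~\ref{cor:repetition of rows} and Remark~\ref{rmk:reducible}. First I would feed the coincidence $\overline{\lambda_\phi^\R}(v)=\overline{\lambda_\phi^\R}(w)$ into Lemma~\ref{lemma:non-injectivity}: it yields a suspended pair $\{2k-1,2k\}$ of the seed $K$ with $v=(2k-1)_s$ and $w=(2k)_t$ for some $s\le j_{2k-1}$, $t\le j_{2k}$, so the repeated rows sit inside the join factor $\partial I_k(J_k)$ of the $J$-construction~\eqref{eq:J-construction}. Since every $J$-construction of $\partial I_k$ is the boundary of a simplex, its unique minimal non-face is its whole vertex set; hence by Corollary~\ref{cor:repetition of rows} the pair $\{v,w\}$ is a suspended pair of $K(J)$ precisely when it is that whole vertex set, i.e. when $J_k=(1,1)$. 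The hypothesis that $\{v,w\}$ is not a suspended pair therefore gives $J_k\ne(1,1)$, and I may assume $j_{2k-1}\ge 2$; irreducibility of $\phi$ then forces $\overline{\lambda_\phi^\R}((2k-1)_1)\ne\overline{\lambda_\phi^\R}((2k-1)_2)$ via Remark~\ref{rmk:reducible}.

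Next I would build $J'$ and $f$: put $j'_{2k-1}=1$, $j'_{2k}=j_{2k-1}+j_{2k}-1$ and $j'_i=j_i$ otherwise, and let $f\colon K(J)\to K(J')$ be the identity on all vertices except the wedged copies of $2k-1$, on which $f((2k-1)_l)=(2k)_{j_{2k}+l-1}$ for $l>1$. As $\partial I_k(J_k)$ and $\partial I_k(J'_k)$ are boundaries of a simplex on the same number $j_{2k-1}+j_{2k}$ of vertices and $f$ restricts to a bijection between their vertex sets, while~\eqref{eq:J-construction} matches up all the remaining join factors, $f$ is an isomorphism of simplicial complexes. Then $\mu^\R:=\lambda_\phi^\R\circ f^{-1}$ is a mod~$2$ characteristic map over $K(J')$, and $\overline{\mu^\R}$ is simply $\overline{\lambda_\phi^\R}$ with its rows relabeled by $f$.

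The heart of the argument is to certify that the puzzle of $\mu^\R$ over $K(J')$ is not irreducible. In $K(J')$ the $2k$-block has $j_{2k-1}+j_{2k}-1\ge 2$ vertices, all wedged copies of the single seed vertex $2k$. If $s\ge 2$ then $f(v)=(2k)_{j_{2k}+s-1}$ and $f(w)=(2k)_t$ are two distinct such copies satisfying $\overline{\mu^\R}(f(v))=\overline{\mu^\R}(f(w))$, so Remark~\ref{rmk:reducible} shows the puzzle is reducible. The obstacle, and the step I expect to be the real work, is the boundary case $s=1$: here $f$ fixes $v=(2k-1)_1$, which is the lone copy of $2k-1$ in $K(J')$, so $\{f(v),f(w)\}$ is not a pair of copies of a single seed vertex and Remark~\ref{rmk:reducible} does not apply verbatim. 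I would handle it by exploiting the structure of a boundary-of-simplex join factor: since the dual characteristic map of such a factor is determined up to D-J equivalence, all rows of $\overline{\lambda_\phi^\R}$ in the $\partial I_k$-block agree modulo the span of the non-block rows; combined with the distinctness of the $(2k-1)$-copies coming from irreducibility, this forces a coincidence among copies of $2k$ inside the enlarged $2k$-block of $K(J')$ (alternatively one re-chooses which vertex of the suspended pair is labeled $2k-1$ and which wedge direction is collapsed, using $J_k\ne(1,1)$ to ensure the offending vertex can be taken to be a non-root copy), and Remark~\ref{rmk:reducible} then finishes the proof.
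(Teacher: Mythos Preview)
Your proposal follows the paper's argument faithfully; the paper's ``proof'' is precisely the discussion preceding the theorem, and you have reproduced its structure (Lemma~\ref{lemma:non-injectivity}, Corollary~\ref{cor:repetition of rows}, the construction of $J'$ and $f$, Remark~\ref{rmk:reducible}) correctly.

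The one point to clean up is your handling of the case $s=1$. This is not a genuine obstacle: the wedged copies $(2k-1)_1,\dots,(2k-1)_{j_{2k-1}}$ are interchangeable, so once $j_{2k-1}\ge 2$ you may simply relabel them (equivalently, choose which copy $f$ retains) so that $v=(2k-1)_s$ has $s\ge 2$. That is exactly your ``alternative'' (b), and it is the intended resolution---the paper's ``without loss of generality'' is doing this implicitly. Your primary approach (a), arguing via congruence of the $\partial I_k$-block rows modulo the span of the non-block rows, does not work as stated: that congruence gives only equality modulo $(0,*)$-vectors, not an actual coincidence $\overline{\mu^\R}((2k)_\alpha)=\overline{\mu^\R}((2k)_\beta)$, so it does not trigger Remark~\ref{rmk:reducible}. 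Drop (a) and promote (b) to the main line. Finally, note that the observation $\overline{\lambda_\phi^\R}((2k-1)_1)\ne\overline{\lambda_\phi^\R}((2k-1)_2)$ is not actually needed for the reducibility conclusion; it only explains why one cannot already see reducibility inside $K(J)$.
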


We call $\overline{\lambda^\R}$ \emph{quasi-injective} if $\overline{\lambda^\R}(v) =\overline{\lambda^\R}(w)$ implies that $\{v, w\}$ is a suspended pair of $K$. 
From the above theorem, we can restate the modified toric wedge induction based on a seed as the following.

\begin{proposition} \label{MTWI2}
    Let $K$ be a colorable seed, and
    $$
        \cX_K=\{(L, \lambda^\R) \mid L=K(J) \text{ for some } J, \ \lambda^\R \text{ is a mod~$2$ characteristic map over } L \}.
    $$
    For a property $\cP$, suppose that the following holds;
    \begin{enumerate}
      \item \textbf{Basis step:} For any positive integer tuple $J$ and any quasi-IDCM $\overline{\lambda^\R}$ over $K(J)$, $(K(J),\lambda^\R)$ satisfies $\cP$.
      \item \textbf{Inductive step:} If $(L, \lambda^\R) \in \cX_K$ satisfies $\cP$, then so does the pair consisting of the wedge of $L$ at $v$ and the canonical extension of $\lambda^\R$ at $v$ for any vertex $v$ of $L$.
    \end{enumerate}
    Then $\cP$ holds on $\cX_K$.
\end{proposition}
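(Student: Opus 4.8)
The plan is to prove the conclusion directly, namely that every $(K(J),\lambda^\R)\in\cX_K$ satisfies $\cP$, by induction on $N(J):=j_1+\dots+j_m$, the number of vertices of $K(J)$. Throughout, as with Propositions~\ref{TWI} and~\ref{MTWI}, I tacitly assume $\cP$ is invariant under relabelings of the vertex set; for the lifting property this is part of Proposition~\ref{proposition:lifting_is_D-J_class_property}. One first records the elementary fact that $\cX_K$ is a wedge-stable set based on $K$: indeed $\wed_v(K(J))=K(J')$ for the tuple $J'$ obtained from $J$ by raising its $v$-entry by one, and $\lambda_1^\R\wedge_v\lambda_2^\R$, when defined, is a mod~$2$ characteristic map over $\wed_v(K(J))$. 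Thus the statement is a reformulation of the modified toric wedge induction (Proposition~\ref{MTWI}) with the same inductive step but a cheaper basis step.

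For the base case $N(J)=m$ we have $J=(1,\dots,1)$ and $K(J)=K$. If $\overline{\lambda^\R}(v)=\overline{\lambda^\R}(w)$ for distinct vertices $v,w$ of $K$, then by Corollary~\ref{cor:repetition of rows} the pair $\{v,w\}$ is a suspended pair or a wedged edge of $K$, and the latter is impossible since $K$ is a seed; hence $\overline{\lambda^\R}$ is quasi-injective, i.e.\ a quasi-IDCM over $K$, and $(K,\lambda^\R)$ satisfies $\cP$ by the assumed basis step.

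For the inductive step, let $N(J)>m$, suppose the claim holds for all tuples with fewer vertices, and let $(K(J),\lambda^\R)\in\cX_K$ with associated realizable puzzle $\phi$. I would distinguish three cases. First, if $\phi$ is reducible, then by \cite[Corollary~4.4]{CP_wedge_2} and the discussion preceding Proposition~\ref{MTWI}, $\lambda^\R$ is the canonical extension at some vertex of a mod~$2$ characteristic map $\nu^\R$ over a $J$-construction $K(J'')$ with $N(J'')=N(J)-1$; the induction hypothesis gives $\cP$ for $(K(J''),\nu^\R)$, and the assumed inductive step transfers it to $(K(J),\lambda^\R)$. Second, if $\phi$ is irreducible and $\overline{\lambda^\R}$ is quasi-injective, then $\overline{\lambda^\R}$ is a quasi-IDCM over $K(J)$ and $(K(J),\lambda^\R)$ satisfies $\cP$ by the assumed basis step. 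Third, if $\phi$ is irreducible but $\overline{\lambda^\R}$ is not quasi-injective, there are distinct vertices $v,w$ with $\overline{\lambda^\R}(v)=\overline{\lambda^\R}(w)$ and $\{v,w\}$ not a suspended pair --- necessarily, by Lemma~\ref{lemma:non-injectivity}, arising from a wedged suspended pair of $K$; the preceding theorem then produces an isomorphism $f\colon K(J')\to K(J)$ with $N(J')=N(J)$ such that $\lambda^\R\circ f$ does not correspond to an irreducible puzzle over $K(J')$, i.e.\ its puzzle is reducible. Applying the first case to $(K(J'),\lambda^\R\circ f)\in\cX_K$ shows it satisfies $\cP$, and since $f$ is a vertex relabeling so does $(K(J),\lambda^\R)$. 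This closes the induction.

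The main obstacle is the third case: an irreducible puzzle over $K(J)$ whose dual characteristic map repeats a row is precisely what distinguishes irreducible puzzles from quasi-IDCMs, and such puzzles are not covered by the basis hypothesis. The whole point of the theorem preceding this proposition is to rewrite such a puzzle, up to an isomorphism of $J$-constructions, as a reducible one that the inductive step can absorb. The remaining ingredients are routine: by \cite[Corollary~4.4]{CP_wedge_2} and the fact that every puzzle arises from an irreducible one by canonical extensions, a reducible puzzle is a canonical extension of a genuine mod~$2$ characteristic map over a $J$-construction with one fewer vertex; and $\cP$, being a property of pairs up to relabeling, is carried along isomorphisms.
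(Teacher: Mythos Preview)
Your proof is correct and is precisely the argument the paper leaves implicit when it says ``From the above theorem, we can restate the modified toric wedge induction based on a seed as the following.'' The paper's intended route is to derive Proposition~\ref{MTWI2} from Proposition~\ref{MTWI} together with the preceding theorem (irreducible puzzles with a non-suspended row repetition become reducible after an isomorphism of $J$-constructions), and your three-case induction on $N(J)$ spells this out faithfully; in particular your Case~3 is exactly the use of that theorem, and your Case~1 is the observation (via \cite[Corollary~4.4]{CP_wedge_2} and Remark~\ref{rmk:reducible}) that a reducible puzzle is a canonical extension of a map over a $J$-construction with one fewer vertex.
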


Even though Lemma~\ref{lemma:finite irreducible} ensures that the basis step of modified toric wedge induction is a finite problem, we can see directly that in this form.
The number of suspended pairs can not exceed the Picard number, so the number of rows of a quasi-IDCM can not exceed $p+2^p-1$.

\begin{remark}
In particular, (modified) toric wedge induction is useful when we want to see a property for all real toric spaces over PL~spheres of Picard number~$p$.
By the injectivity of mod~$2$ dual characteristic maps over non-suspended seeds, if $K$ is a non-suspended seed, then we have $m \leq 2^p-1$, so there are finitely many non-suspended seeds of Picard number~$p$.
Since suspended seeds of Picard number~$p$ are suspensions of non-suspended and suspended seeds of Picard number~$p-1$, there are finitely many suspended seeds of Picard number~$p$ as well.
Hence, Lemma~\ref{lemma:finite irreducible} guarantees that the basis steps of (modified) toric wedge induction based on all seeds of Picard number~$p$ can be also solved in finite time.
\end{remark}

\section{Proof of the main theorem}
For the reader's convenience, we recall the statement of the main theorem.
\main*

\begin{proof}

Let $r$ be the rank of a subgroup acting freely on $\R \cZ_K$.
For the case $r \leq 3$, the statement is verified by Theorem~\ref{thm: r3}.
Hence, it is enough to consider the case when $m=n+4$, and $r=m-n=4$.

We apply the modified version of toric wedge induction with $\cP$ as in Proposition~\ref{MTWI2}.
Let $K$ be a seed.
For a positive integer tuple $J$ and a mod~$2$ characteristic map $\lambda^\R$ over $K(J)$, we say the pair $(K(J), \lambda^\R)$ satisfies $\cP$ if and only if the mod~$2$ characteristic map $\lambda^\R$ has a lift.
The basis step of any seed of Picard number~$4$ will be accomplished in Section~\ref{section: basis step 1}.
The inductive step follows from Lemma~\ref{lemma:inductive step} below.
Therefore, by Proposition~\ref{MTWI2}, $\cP$ holds on the set of all real toric spaces of Picard number~$4$.
\end{proof}

\begin{lemma}[Inductive Step] \label{lemma:inductive step}
    For a PL~sphere $K$, if a mod~$2$ characteristic map $\lambda^\R$ over $K$ has a lift, then the canonical extension of $\lambda^\R$ at $v$ has a lift for any vertex $v$ of $K$.
\end{lemma}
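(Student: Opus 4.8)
The plan is to make the structure of the canonical extension explicit at the level of matrices and then lift it coordinate by coordinate. Suppose $\lambda^\R$ is a mod~$2$ characteristic map over $K$, and, using D-J equivalence and vertex relabeling, write it in the normalized form $\lambda^\R = \begin{bmatrix} 1 & \mathds{O} & \mathbf{b} \\ \mathds{O} & I_{n-1} & A \end{bmatrix}$ where the first column corresponds to the vertex $v$, $\mathbf{b}$ is a row vector of size $m-n$, and $A$ is an $(n-1)\times(m-n)$ matrix over $\Z_2$. By the discussion preceding Proposition~\ref{TWI}, the canonical extension $\Lambda^\R = \lambda^\R \wedge_v \lambda^\R$ over $\wed_v(K)$ is (up to D-J equivalence) exactly the matrix displayed in~\eqref{eq:wedged char} with $\mathbf{a} = \mathbf{b}$, namely
\begin{equation*}
  \Lambda^\R = \kbordermatrix{ & v_1 & v_2 & & \\ & 1 & 0 & \multirow{2}{*}{$\mathds{O}$} & \mathbf{b} \\ & 0 & 1 & & \mathbf{b} \\ & \multicolumn{2}{c}{$\mathds{O}$} & I_{n-1} & A }.
\end{equation*}

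The key step is to produce the lift of $\Lambda^\R$ directly from a lift of $\lambda^\R$. First I would invoke Proposition~\ref{proposition:lifting_is_D-J_class_property} to assume that the lift $\widetilde{\lambda^\R}$ we are given is itself in the normalized form above over $\Z$, i.e.\ $\widetilde{\lambda^\R} = \begin{bmatrix} 1 & \mathds{O} & \widetilde{\mathbf{b}} \\ \mathds{O} & I_{n-1} & \widetilde{A} \end{bmatrix}$ with $\widetilde{\mathbf{b}} \equiv \mathbf{b}$ and $\widetilde{A} \equiv A$ mod~$2$ (this is harmless since applying an invertible integer row operation to a lift yields another lift, and we may arrange the first $n$ columns to be the identity because $\{v, \text{the vertices of the chosen facet}\}$ is a facet whose corresponding minor of $\widetilde{\lambda^\R}$ is $\pm 1$, hence can be brought to $I_n$ over $\Z$). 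Then I claim the integer matrix
\begin{equation*}
  \widetilde{\Lambda^\R} = \kbordermatrix{ & v_1 & v_2 & & \\ & 1 & 0 & \multirow{2}{*}{$\mathds{O}$} & \widetilde{\mathbf{b}} \\ & 0 & 1 & & \widetilde{\mathbf{b}} \\ & \multicolumn{2}{c}{$\mathds{O}$} & I_{n-1} & \widetilde{A} }
\end{equation*}
is a lift of $\Lambda^\R$. The mod~$2$ reduction is clearly $\Lambda^\R$ by construction, so the only thing to check is that $\widetilde{\Lambda^\R}$ is an honest integral characteristic map over $\wed_v(K)$, i.e.\ every facet minor is $\pm 1$.

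The verification of the facet condition is where the real work lies, and it is the step I expect to be the main (though modest) obstacle. The facets of $\wed_v(K)$ are, by the wedge construction, of two types: those containing exactly one of $v_1, v_2$ together with a facet of $K$ (under the identification $\lk_{\wed_v(K)}(v_i) \cong K$), and those containing $\{v_1, v_2\}$ together with a facet of $\lk_K(v)$. For a facet $\sigma$ of $\wed_v(K)$ containing $v_1$ but not $v_2$, expanding the determinant of $\widetilde{\Lambda^\R}_\sigma$ along the row of $v_1$ (which is $(1,0,\mathds{O},\widetilde{\mathbf{b}})$) and then along the column of $v_2$ reduces it, up to sign, to the corresponding facet minor of $\widetilde{\lambda^\R}$, which is $\pm 1$ by hypothesis; the case of $v_2$ is symmetric. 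For a facet $\sigma \ni v_1, v_2$, the submatrix $\widetilde{\Lambda^\R}_\sigma$ has the two rows $(1,0,\dots)$ and $(0,1,\dots)$ in the $v_1, v_2$ columns, so its determinant equals the minor of $\widetilde{A}$ (equivalently of $\widetilde{\lambda^\R}$) on the remaining columns, which corresponds to a facet of $\lk_K(v)$ extended by $v$, hence is again $\pm 1$. Assembling these three cases gives that $\widetilde{\Lambda^\R}$ is a characteristic map over $\wed_v(K)$, completing the inductive step. (Alternatively, one can package the determinant bookkeeping through Gale duality and Lemma~\ref{lemma: dual lifting}, lifting $\overline{\lambda^\R}$ and noting that the dual of a canonical extension is obtained by the transparent operation of duplicating a row of $\overline{\lambda^\R}$ together with a new standard basis row; I would present whichever version is shorter.)
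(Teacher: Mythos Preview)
Your approach is essentially the paper's: normalize so that the first $n$ columns of $\lambda^\R$ and of its lift are the identity, duplicate the first row of the tail block to produce $\Lambda$ over $\wed_v(K)$, and declare this a lift of the canonical extension---the paper in fact stops there and does not spell out the facet-by-facet determinant check you add. One small slip in your write-up: in the case $v_1\in\sigma$, $v_2\notin\sigma$ there is no ``column of $v_2$'' in $\widetilde{\Lambda^\R}_\sigma$ to expand along; the clean reduction is simply to expand along the column $v_1$ (which is $e_1$), after which the remaining $n\times n$ minor is exactly $\widetilde{\lambda^\R}_\tau$ since rows $2,\ldots,n{+}1$ of $\widetilde{\Lambda^\R}$ restricted to $\tau$ coincide with rows $1,\ldots,n$ of $\widetilde{\lambda^\R}$ restricted to $\tau$.
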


\begin{proof}
    Let $K$ be an $(n-1)$-dimensional PL~sphere on $[m]$.
    By relabeling the vertices and D-J equivalence, we may assume that the first $n$ columns of $\lambda^\R$ and the last $m-n$ columns of $\overline{\lambda}^\R$ are of form the identity matrix, and the wedge operation is performed at $v=1 \in [m]$.
    Let $\lambda=\widetilde{\lambda^\R}$ be a lift of $\lambda^\R$.
    By Proposition~\ref{proposition:lifting_is_D-J_class_property}, up to D-J equivalence, we can also assume that its first $n$ columns form the $n \times n$ identity matrix.

    Set $\lambda^\R = \begin{bmatrix}
                     \multirow{4}{*}{$I_n$}  & \overline{\lambda^\R}(1) \\
                      & \overline{\lambda^\R}(2) \\
                      & \vdots \\
                      & \overline{\lambda^\R}(n)
                   \end{bmatrix}$
    and its lift $\lambda = \begin{bmatrix}
                     \multirow{4}{*}{$I_n$}  & \overline{\lambda}(1) \\
                      & \overline{\lambda}(2) \\
                      & \vdots \\
                      & \overline{\lambda}(n)
                   \end{bmatrix}$.
    Then, 
    $$
       \Lambda^\R = \begin{bmatrix} \multirow{5}{*}{$I_{n+1}$}  & \overline{\lambda^\R}(1) \\
                                                                        & \overline{\lambda^\R}(1) \\
                                                                        & \overline{\lambda^\R}(2) \\
                                                                        & \vdots \\
                                                                        & \overline{\lambda^\R}(n)
                                                                \end{bmatrix} \text{ and }
        \Lambda = \begin{bmatrix} \multirow{5}{*}{$I_{n+1}$}  & \overline{\lambda}(1) \\
                                                                        & \overline{\lambda}(1) \\
                                                                        & \overline{\lambda}(2) \\
                                                                        & \vdots \\
                                                                        & \overline{\lambda}(n)
                                                                \end{bmatrix}
    $$
    are the canonical extension of $\lambda^\R$ at $1$ and its lift, respectively.
\end{proof}

\section{The basis step} \label{section: basis step 1}
In this section, we prove that every quasi-IDCM over a PL~sphere $K$ of Picard number~$4$ has a lift as a mod~$2$ characteristic map over $\overline{K}$.
Then the basis step of Proposition~\ref{MTWI2} is accomplished by Lemma~\ref{lemma: dual lifting}.

Let $K$ be an $n-1$ dimensional PL~sphere of Picard number~$4$.
Suppose that $K$ admits a quasi-IDCM $\lambda^\R$ that is not an IDCM.
Then if $K = L(J)$ for some seed $L$, then by Theorem~\ref{theorem: classification}, $L$ is one of the following: \begin{enumerate}
                                                                          \item $L_1 = \partial I \ast \partial P_5$,
                                                                          \item $L_2 = \partial I_1 \ast \partial I_2 \ast \partial I_3 \ast \partial I_4$,
                                                                          \item $L_3 = \partial I \ast \partial C^4(7)$,
                                                                        \end{enumerate}
where $P_5$ is a pentagon, and $C^4(7)$ is a $4$-dimensional cyclic polytope with $7$ vertices.
By Example~\ref{example: general Bott}, if $K = L_2(J)$, then every mod~$2$ characteristic map over $K$ has a lift, so assume that $L \not= L_2$ in addition.
Since there is no seed of Picard number~$2$ with $n=1$ and $3$ by Theorem~\ref{theorem: classification}, we can see that $\partial P_5$ and $\partial C^4(7)$ are non-suspended seeds.
Hence $\lambda^\R$ has exactly one pair of vertices with row repetition.

For convenience, let $L' = \partial P_5$ or $ \partial C^4(7)$. 
By \eqref{eq:J-construction of suspension}, we can write $K = L(J) = \partial I \ast L'(J_2)$.
With setting $K_1 = \partial I$ and $K_2 = L'(J_2)$, $\lambda^\R$ is of the form by \eqref{eq:joining representative}: \begin{equation*}
    \lambda^\R = \begin{bmatrix}  1 & 1  & \mathds{O} \\
                                    \mathds{O} & \ast & \mu^\R \end{bmatrix},
      \end{equation*}
where $\mu^\R$ is a mod~$2$ characteristic map over $L'(J_2)$.
By the definition of the join, a subset $\sigma$ of the vertex set of $K$ is facet of $K$ if and only if $\sigma = \{v\} \cup \tau$ for a vertex $v$ of $\partial I$ and a facet $\tau$ of $L'(J_2)$.
By Theorem~\ref{thm: r3}, there is a lift $\widetilde{\mu^\R}$ of $\mu^\R$.
Then \begin{equation*}
    \begin{bmatrix}  1 & 1  & \mathds{O} \\
                                    \mathds{O} & \ast & \widetilde{\mu^\R} \end{bmatrix}
      \end{equation*}
is a lift of $\lambda^\R$.

In the rest of the section, let us focus only on IDCMs.

\begin{theorem} \label{theorem: 168}
    Let $K$ be a PL~sphere with less than $168$ facets.
    Then every injective mod~$2$ dual characteristic map over $K$ has a lift as a mod~$2$ characteristic map over $\overline{K}$.
\end{theorem}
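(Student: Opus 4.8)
The plan is to reduce the lifting problem for an IDCM $\overline{\lambda^\R}$ over $K$ to a finite combinatorial problem about binary matroids of rank~$4$, and then dispose of that finite problem. First I would pass to the dual side: by Lemma~\ref{lemma: dual lifting}, it suffices to lift $\overline{\lambda^\R}$, viewed as a mod~$2$ characteristic map over the dual complex $\overline{K}$, to an integral characteristic map over $\overline{K}$. Since $\overline{\lambda^\R}$ is injective, its $m$ rows are pairwise distinct nonzero vectors in $\Z_2^4$; there are only $2^4-1=15$ such vectors, so $m \leq 15$, and the multiset of rows is just a subset of the nonzero vectors of $\Z_2^4$. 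Up to D-J equivalence (column operations, i.e.\ $GL_4(\Z_2)$-action) the data of $\overline{\lambda^\R}$ is therefore exactly the data of a simple binary matroid of rank~$4$ on the ground set $[m]$, namely the matroid represented by the rows. This is the ``specific binary matroid of rank~$4$'' advertised in the abstract.

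Next I would translate the lifting condition into matroid/combinatorial language. An integral lift $\overline{\lambda}$ exists and is a characteristic map over $\overline{K}$ precisely when, for every facet $F$ of $\overline{K}$ (equivalently, every cofacet of $K$, i.e.\ every minimal non-face-complement, which under Gale duality corresponds to the complement of a facet of $K$), the corresponding $(m-n)\times(m-n) = 4\times 4$ submatrix of $\overline{\lambda}$ has determinant $\pm1$. By Lemma~\ref{lemma:Choi-Park2016}, a $\{0,\pm1\}$ (indeed $\{0,1\}$) matrix reducing to a given $\Z_2$-matrix with the right nonsingularity pattern exists as soon as we can choose signs on the $\{0,1\}$-lift making every relevant $4\times 4$ minor equal to $\pm1$ rather than merely odd. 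Thus the obstruction is entirely about the finitely many $\{0,1\}$-matrices: for the (finitely many, since $m\leq 15$) isomorphism types of rank-$4$ simple binary matroids, and each choice of which $4$-subsets are the ``facets of $\overline{K}$'', we must show the $\{0,1\}$-lift (or some $\{0,\pm1\}$ modification of it) has all the required minors equal to $\pm1$ simultaneously. Here the hypothesis that $K$ has fewer than $168$ facets enters: I expect the number $168$ is the smallest facet count of a PL~sphere whose IDCM fails to admit a $\{0,1\}$-lift, so that below this threshold one always succeeds with the literal $\{0,1\}$-lift, while at and above it one genuinely needs the sign-adjustment argument (handled, presumably, in a later section). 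So the proof here should run: list the possible row-sets (subsets of $\Z_2^4\setminus\{0\}$), for each compute the $\{0,1\}$-lift $\widetilde{\overline{\lambda^\R}}$, and verify that every $4\times 4$ minor indexed by a complement of a facet of $K$ is $\pm1$; since all such minors are determinants of $\{0,1\}$-matrices of size $4$ and the only $\{0,1\}$-matrices of size $4$ with odd determinant but $|\det|>1$ are those with $|\det|=3$, one must only rule out the appearance of a determinant-$3$ configuration among the relevant minors, which is where the facet-count bound $<168$ does the work.

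The main obstacle I anticipate is precisely controlling the determinant-$3$ $\{0,1\}$-minors: unlike the $r\leq 3$ case (Theorem~\ref{thm: r3}), a $4\times4$ $\{0,1\}$-matrix can have determinant $\pm3$, so the naive ``$\{0,1\}$-lift works'' argument breaks. I would handle this by identifying, among the $15$ vectors of $\Z_2^4\setminus\{0\}$, exactly which $4$-subsets give rise to a $\{0,1\}$-representative with $|\det|=3$ — there is essentially one such pattern up to symmetry, coming from the complement of the all-ones structure — and then showing that for a PL~sphere with fewer than $168$ facets, the set of cofacets (complements of facets) cannot ``hit'' a bad $4$-subset of rows, because doing so would force, via the pseudomanifold/Dehn–Sommerville constraints on $K$, at least $168$ facets. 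The counting argument — showing that the presence of a determinant-$3$ minor in the cofacet positions forces the $f$-vector of $K$ to be large — is the technical heart and the step most likely to require care; everything else is bookkeeping over a finite list enabled by $m\leq 15$ and Lemma~\ref{lemma:Choi-Park2016}.
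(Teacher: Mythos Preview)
Your setup is correct up to the point where you identify the only obstruction as the determinant-$\pm3$ minors of the naive $\{0,1\}$-lift, and that there is essentially one such bad configuration up to $GL(4,\Z_2)$-symmetry. But the mechanism you propose for exploiting the hypothesis ``fewer than $168$ facets'' is not the right one, and it cannot be made to work as stated.

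You suggest showing that if some cofacet of $K$ lands on a determinant-$3$ quadruple for the fixed $\{0,1\}$-lift, then Dehn--Sommerville or pseudomanifold constraints force $K$ to have at least $168$ facets. There is no such structural implication, and the number $168$ has nothing to do with $f$-vector constraints on PL spheres. The paper's argument is instead a pigeonhole over D-J representatives. Concretely: among the $|GL(4,\Z_2)|/4! = 840$ bases of the rank-$4$ binary matroid on $\Z_2^4\setminus\{0\}$, exactly five have $\{0,1\}$-lift with determinant $\pm3$, and these five form a single block in the sense that any $g\in GL(4,\Z_2)$ either maps this $5$-element family onto itself or onto a disjoint $5$-element family. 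Hence the $GL(4,\Z_2)$-translates of this family partition the $840$ bases into $840/5=168$ blocks, and for each $g$ exactly one block is the ``bad'' one for the $\{0,1\}$-lift of $g\cdot\overline{\lambda^\R}$. If $\overline{K}$ has fewer than $168$ facets, the reverse pigeonhole principle yields a block disjoint from the facets of $\overline{K}$, i.e.\ a $g$ for which the $\{0,1\}$-lift of the D-J equivalent map $g\cdot\overline{\lambda^\R}$ has every relevant $4\times4$ minor equal to $\pm1$; Proposition~\ref{proposition:lifting_is_D-J_class_property} then finishes.

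So the missing idea is: do not fix the $\{0,1\}$-lift and argue about the combinatorics of $K$; instead, vary the D-J representative over $GL(4,\Z_2)$ and use the block structure of the bad quadruples to count. No sign-flipping and no $f$-vector input is needed here; those tools (the $\{0,\pm1\}$-lift search) only enter later, for the finitely many PL spheres with at least $168$ facets that the present theorem does not cover.
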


\begin{proof}    
    Let $A$ be a $4 \times 15$ matrix over $\Z$ consisting of only $0, 1$ entries with neither repeated columns nor the zero column.
    Consider the binary matroid $\cM$ representing the mod~$2$ linear independence relations between the columns of $A$.
    More explicitly, it is the simplicial complex whose facets are the set of the indices of $4$ columns with an odd determinant in $\Z$.
    Through direct computation, we know that it has $|GL(4,\Z_2)|/4! = 840$ facets, where $GL(4,\Z_2)$ is the general linear group of degree $4$ over $\Z_2$.
    Among them, $835$ correspond to sets of $4$ vectors with determinants $\pm 1$, and the other $5$ correspond to the following sets of $4$ vectors with determinants $\pm 3$:
    \begin{center}
    	$A_1 = \left\{\begin{bmatrix}
    		1 \\
    		1 \\
    		0 \\
    		0
    	\end{bmatrix},
    	\begin{bmatrix}
    		1 \\
    		0 \\
    		1 \\
    		0
    	\end{bmatrix},
    	\begin{bmatrix}
    		1 \\
    		0 \\
    		0 \\
    		1
    	\end{bmatrix},
    	\begin{bmatrix}
    		0 \\
    		1 \\
    		1 \\
    		1
    	\end{bmatrix}\right\}$, 
    	$A_2 = \left\{\begin{bmatrix}
    	1 \\
    	1 \\
    	0 \\
    	0
    	\end{bmatrix},
    	\begin{bmatrix}
    	0 \\
    	1 \\
    	1 \\
    	0
    	\end{bmatrix},
    	\begin{bmatrix}
    	0 \\
    	1 \\
    	0 \\
    	1
    	\end{bmatrix},
    	\begin{bmatrix}
    	1 \\
    	0 \\
    	1 \\
    	1
    	\end{bmatrix}\right\}$, 
    	$A_3 = \left\{\begin{bmatrix}
    		1 \\
    		0 \\
    		1 \\
    		0
    	\end{bmatrix},
    	\begin{bmatrix}
    		0 \\
    		1 \\
    		1 \\
    		0
    	\end{bmatrix},
    	\begin{bmatrix}
    		0 \\
    		0 \\
    		1 \\
    		1
    	\end{bmatrix},
    	\begin{bmatrix}
    		1 \\
    		1 \\
    		0 \\
    		1
    	\end{bmatrix}\right\}$,
    	\medskip
    	
    	$A_4 = \left\{\begin{bmatrix}
    		1 \\
    		0 \\
    		0 \\
    		1
    	\end{bmatrix},
    	\begin{bmatrix}
    		0 \\
    		1 \\
    		0 \\
    		1
    	\end{bmatrix},
    	\begin{bmatrix}
    		0 \\
    		0 \\
    		1 \\
    		1
    	\end{bmatrix},
    	\begin{bmatrix}
    		1 \\
    		1 \\
    		1 \\
    		0
    	\end{bmatrix}\right\}$, and 
    	$A_5 = \left\{\begin{bmatrix}
    		1 \\
    		1 \\
    		1 \\
    		0
    	\end{bmatrix},
    	\begin{bmatrix}
    		1 \\
    		1 \\
    		0 \\
    		1
    	\end{bmatrix},
    	\begin{bmatrix}
    		1 \\
    		0 \\
    		1 \\
    		1
    	\end{bmatrix},
    	\begin{bmatrix}
    		0 \\
    		1 \\
    		1 \\
    		1
    	\end{bmatrix}\right\}$.
    \end{center}
    Let $a_1 = \begin{bmatrix}
    	1 \\
    	1 \\
    	0 \\
    	0
    \end{bmatrix}$, 
    $a_2 = \begin{bmatrix}
    	1 \\
    	0 \\
    	1 \\
    	0
    \end{bmatrix}$, 
    $a_3 = \begin{bmatrix}
    	1 \\
    	0 \\
    	0 \\
    	1
    \end{bmatrix}$, and 
    $a_4 = \begin{bmatrix}
    	0 \\
    	1 \\
    	1 \\
    	1
    \end{bmatrix}$
    be the four vectors in $A_1$.
    In mod~$2$, we can observe that \begin{align*}
    	A_2 &= \{a_1, a_1+a_2, a_1+a_3, a_1+a_4 \}, \\
    	A_3 &= \{a_1 + a_2, a_2, a_2+a_3, a_2+a_4 \}, \\
    	A_4 &= \{a_1 + a_3, a_2 + a_3, a_3, a_3+a_4\}, \text{ and } \\
    	A_5 &= \{a_1 + a_4, a_2 + a_4, a_3+a_4, a_4 \}.
    \end{align*}
    Notice that this combinatorial structure does not depend on the choice of $A_i$'s and $a_j$'s.

    Define $ind_A(A_i) = $ the set of indices of vectors in $A_i$ in $A$.
    For an element $g$ of $GL(4, \Z_2)$, $gA$ is obtained by a column permutation of $A$, so the five sets appear again with some other column indices.
    Suppose that $\{ind_A(A_1), ind_A(A_1), \ldots, ind_A(A_5)\} \cap \{ind_{gA}(A_1), ind_{gA}(A_2), \ldots, ind_{gA}(A_5)\} \not= \varnothing$, that is they contain a common element $ind_A(A_i) = ind_{gA}(A_j)$.
    This means that $g(A_i) = A_j$.
    Then by the property we discussed above, $g$ maps the collection of the five sets $A_i$ on itself.
    Hence for any $g \in GL(4, \Z_2)$, there are only two possibilities:
    \begin{itemize}
    	\item $\{ind_A(A_1), ind_A(A_2), \ldots, ind_A(A_5)\} \cap \{ind_{gA}(A_1), ind_{gA}(A_2), \ldots, ind_{gA}(A_5)\} = \varnothing$ or
    	\item $\{ind_A(A_1), ind_A(A_2), \ldots, ind_A(A_5)\} = \{ind_{gA}(A_1), ind_{gA}(A_2), \ldots, ind_{gA}(A_5)\}$.
    \end{itemize}
    Since every subset of column vectors of $A$ consisting of $4$ vectors with determinant $3$ or $-3$ can be transformed into $A_1$ in mod~$2$ by multiplying with a suitable $g \in GL(4, \Z_2)$, this yields a partition $\cA = \{\{ind_{gA}(A_1), ind_{gA}(A_2), \ldots, ind_{gA}(A_5)\} \mid g \in GL(4, \Z_2) \}$ of the set of facets of $\cM$ with $|\cA| = 840/5 =168$.
    In this partition, only one among the $168$ sets contains the set of vectors of determinant $\pm 3$ in $\Z$.

Now, let $\overline{\lambda^\R}$ be an IDCM over $K$.
    Then, there is an embedding of $\overline{K}$ in $\cM$ according to the index of $\overline{\lambda^\R}(i)$ in $A$ for each vertex $i$.
    If $\overline{K}$ has less than $168$ facets, then there exists an element of $\cA$ which does not intersect the set of the facets of the embedding of $\overline{K}$ by the reverse pigeonhole principle.
    This means that there exists $g \in GL(4, \Z_2)$ such that for any facet $\sigma$ of $\overline{K}$, the determinant of the matrix consisting of the $4$ vectors in $\overline{\lambda^\R} g(\sigma)$ is $1$ or $-1$ when we see the matrix as an integer $\{0, 1\}$-matrix.
    Hence, it provides a lift, as desired.
\end{proof}

Let $K$ be a seed of Picard number~$4$.
In turn, let us consider the case where $K$ is an $(n-1)$-dimensional PL~sphere that has more than or exactly $168$ facets and that supports an IDCM.
By using the list of colorable seeds of Picard number~$4$ (as in Theorem~\ref{theorem: classification}), one can see that if $n<10$, then any colorable PL~sphere has less than $168$ facets, and, hence, $n \geq 10$.
On the other hand, the condition of supporting an IDCM implies that $m \leq 15$, so $n\leq 11$.
In addition, we can check whether there is an IDCM over a given $K$ with $10 \leq n \leq 11$ by the Garrison-Scott algorithm \cite{Garrison-Scott2003} for finding all mod~$2$ characteristic maps over $K(J)$ or a modified algorithm, which is faster with small Picard numbers, for finding only IDCMs introduced in \cite{ChoiVal2021-puzzlealgo}.

\begin{algorithm}~\\ \label{algorithm:168}
      \begin{itemize}
      \item \textbf{Input:} a seed $K$.
      \item \textbf{Initialization:} \begin{itemize}
                                       \item[] $\cJ \leftarrow \{(1, \dots, 1)\}$,
                                       \item[] $\cJ_1 \leftarrow \varnothing$,
                                       \item[] $\cJ_2 \leftarrow \varnothing$,
                                       \item[] $\cR \leftarrow \varnothing$.
                                     \end{itemize} 
      \item \textbf{Output:} the list of $J$ such that $K(J)$ admits an IDCM and has $\geq 168$ facets.
      \item \textbf{Procedure:} \begin{enumerate}
                                  \item Set $\cJ \leftarrow$ the first element $\cJ[1]$ of $\cJ$, and remove it from $\cJ$.
                                  \item If there is no IDCM over $K(J)$, then add $J$ to $\cJ_1$, and go to (1).
                                  \item Add $J$ to $\cJ_2$.
                                  \item If $K(J)$ has  $\geq 168$ facets, then add $J$ to $\cR$.
                                  \item If $\cJ \not= \varnothing$, then go to (1).
                                  \item If $\dim(K(J)) = 10$, then return $\cR$.
                                  \item For $i=1, 2$, set $\cJ'_i \leftarrow$ the set of $J'$ such that $J'$ is equal to $J$ for some $J$ in $\cJ_i$ except $k$th component for some $k$, and $k$th component of $J'$ is 1 larger than the one of $J$.
                                  \item Set $\cJ_1 \leftarrow \cJ'_1$, $\cJ_2 \leftarrow \varnothing$, and $\cJ \leftarrow \cJ'_2 \setminus \cJ'_1$, and go to (1).
                                \end{enumerate}
    \end{itemize}
\end{algorithm}

After all these refining by Algorithm~\ref{algorithm:168}, there only remain twenty-one PL~spheres that have more than or exactly $168$ facets and ninety-six IDCMs over them.
Table~\ref{table:IDCM_over_168} shows that the number of such PL~spheres and IDCMs over them.
    \begin{table}[]
		\begin{tabular}{lrrrrrr}
		\toprule
		$n$   &        & 10  &  11  \tabularnewline \midrule
		\multicolumn{2}{l}{$K$}   & 11 & 10 \tabularnewline  \rule{0pt}{3ex}
		&seeds                & 11 & 4 \tabularnewline
		&non-seeds            & 0  & 6 \tabularnewline \midrule
		\multicolumn{2}{l}{$(K, \overline{\lambda^\R})$}  & 11 & 85 \tabularnewline \rule{0pt}{3ex}
		&seeds            & 11 & 5  \tabularnewline
		&non-seeds             & 0  & 80 \tabularnewline
		\bottomrule
	   \end{tabular}
	   \medskip
    \caption{The numbers of PL~sphere $K$ supporting an IDCM and having $\geq 168$ facets (above) and the total number of IDCMs over them (below).} \label{table:IDCM_over_168}
    \end{table}

Two IDCMs $\Lambda_1^\R$ and $\Lambda_2^\R$ over a non-seed $K$ are said to be \emph{symmetric} if they can be expressed by $\Lambda_1^\R = \lambda_1^\R \wedge_v \lambda_2^\R$ and $\Lambda_2^\R = \lambda_2^\R \wedge_v \lambda_1^\R$ for some IDCMs $\lambda_1^\R$ and $\lambda_2^\R$.
Although they are distinguished as IDCMs, the existence of their lifts are equivalent. 
Therefore, it is enough to consider all IDCMs up to symmetry.

Reducing symmetries, there is only one IDCM over each non-seed $K$ of Table~\ref{table:IDCM_over_168}.
See Table~\ref{table:IDCM_up_to_symmetry}.

\begin{table}[]
		\begin{tabular}{lrrrrrr}
		\toprule
		$n$   &        & 10  &  11  \tabularnewline \midrule
		\multicolumn{2}{l}{$(K, \overline{\lambda^\R})$}  & 11 & 11 \tabularnewline \rule{0pt}{3ex}
		&seeds            & 11 & 5  \tabularnewline
		&non-seeds             & 0  & 6 \tabularnewline
		\bottomrule
	\end{tabular}
	\medskip
    \caption{The number of IDCMs up to symmetry over the PL~spheres $K$ having $\geq 168$ facets.} \label{table:IDCM_up_to_symmetry}
\end{table}

The final step is to check whether all twenty-two pairs $(K, \lambda^\R)$ have $\{0, \pm 1\}$-lifts by the following simple algorithm.
\newpage

\begin{algorithm}~\\
    \begin{itemize}
      \item \textbf{Input:} the cofacets $CF$ of $K$ and a mod~$2$ dual characteristic map $\overline{\lambda^\R}$ over $K$
      \item \textbf{Initialization:} \begin{itemize}
                                       \item[] $I \leftarrow$ the list of the indices of nonzero entries in $\lambda^\R$,
                                       \item[] $i \leftarrow 0$.
                                     \end{itemize} 
      \item \textbf{Output:} a $\{0, \pm 1\}$-lift of $\overline{\lambda^\R}$ if it exists and $0$ otherwise.
      \item \textbf{Procedure:} \begin{enumerate}
                                  \item If $i = |I|$, then return $0$.
                                  \item Set $S \leftarrow$ the list of all $i$-subsets of $I$.
                                  \item If $S = \varnothing$, then set $i \leftarrow i+1$, and go to (1).
                                  \item Set $s \leftarrow$ the first element $S[1]$ of $S$, and remove it from $S$.
                                  \item Replace $1$'s in $\overline{\lambda^\R}$ with indices in $s$ by $-1$'s.
                                  \item If there is a cofacet $cf \in CF$ such that the determinant of the matrix consisting of the rows of $\overline{\lambda^\R}$ corresponding to $cf$ is not $\pm 1$, then go to (3).
                                  \item Return $\overline{\lambda^\R}$.
                                \end{enumerate}
    \end{itemize}
\end{algorithm}

In conclusion, we have the following theorem.

\begin{theorem}
    For a PL~sphere of Picard number~$4$, any quasi-injective mod~$2$ dual characteristic map over $K$ has a lift as a mod~$2$ characteristic map over $\overline{K}$.
\end{theorem}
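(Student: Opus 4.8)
The plan is to reduce the statement, in stages, to a finite verification. Since this theorem is exactly the basis step that Proposition~\ref{MTWI2} requires, once rephrased through Lemma~\ref{lemma: dual lifting}, it is enough to produce, for a quasi-injective mod~$2$ dual characteristic map $\overline{\lambda^\R}$ over an $(n-1)$-dimensional PL~sphere $K$ of Picard number~$4$, a $\{0,\pm1\}$-valued integral lift of $\overline{\lambda^\R}$, that is, a characteristic map over $\overline{K}$ reducing to $\overline{\lambda^\R}$ modulo~$2$.

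First I would dispose of the case where $\overline{\lambda^\R}$ is not injective. Quasi-injectivity together with Corollary~\ref{cor:repetition of rows} forces every row repetition to arise from a suspended pair, so $K$ is a suspension; by Proposition~\ref{proposition:suspension wedge} this means $K=L(J)$ for a \emph{suspended} seed $L$ of Picard number~$4$, which by the classification in Theorem~\ref{theorem: classification} is one of $L_1=\partial I\ast\partial P_5$, $L_2=\partial I_1\ast\partial I_2\ast\partial I_3\ast\partial I_4$, or $L_3=\partial I\ast\partial C^4(7)$. The case $L=L_2$ is settled directly by Example~\ref{example: general Bott}. For $L=L_1$ or $L_3$ I would write $K=\partial I\ast L'(J_2)$ with $L'\in\{\partial P_5,\partial C^4(7)\}$ and pass to the joining representative~\eqref{eq:joining representative}: the complementary block $\mu^\R$ is a mod~$2$ characteristic map over $L'(J_2)$, which has Picard number~$3$, so Theorem~\ref{thm: r3} furnishes a lift $\widetilde{\mu^\R}$, and reassembling the block matrix (using that every facet of $\partial I\ast L'(J_2)$ is a vertex of $\partial I$ joined to a facet of $L'(J_2)$) yields a lift of $\overline{\lambda^\R}$. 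Henceforth $\overline{\lambda^\R}$ may be assumed to be an IDCM.

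For IDCMs I would split on the number of facets of $\overline{K}$. When $\overline{K}$ has fewer than $168$ facets, the rank-$4$ binary matroid $\cM$ on the $15$ nonzero $\{0,1\}$-vectors of $\Z_2^4$ — with bases the quadruples of odd integer determinant — does the work: it has $|GL(4,\Z_2)|/4!=840$ facets, exactly five of which (the sets $A_1,\dots,A_5$) carry integer determinant $\pm3$, and the explicit relations between the $A_i$ and the vectors $a_1,\dots,a_4$ show that these five facets form a single $GL(4,\Z_2)$-orbit whose index sets, translated by $GL(4,\Z_2)$, partition the $840$ facets into $168$ blocks of size~$5$, exactly one being ``bad''. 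Since an IDCM embeds $\overline{K}$ into $\cM$, a reverse pigeonhole argument yields a block missing the image of $\overline{K}$, hence a $g\in GL(4,\Z_2)$ for which $\overline{\lambda^\R}g$ pulls each facet of $\overline{K}$ back to a determinant-$\pm1$ quadruple over~$\Z$, that is, $\overline{\lambda^\R}g$ has a $\{0,1\}$-lift; this is Theorem~\ref{theorem: 168}. When $\overline{K}$ has at least $168$ facets, the classification forces $n\geq10$ while the existence of an IDCM forces $m\leq2^4-1=15$, hence $n\leq11$; Algorithm~\ref{algorithm:168} then enumerates this finite range — using that wedging never decreases the facet count and that $n\leq11$ bounds the dimension — and returns twenty-one PL~spheres carrying ninety-six IDCMs, which collapse to twenty-two pairs $(K,\overline{\lambda^\R})$ once the wedge-symmetry $\lambda_1^\R\wedge_v\lambda_2^\R\leftrightarrow\lambda_2^\R\wedge_v\lambda_1^\R$ (under which liftability is invariant) is divided out. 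A direct search — flipping the signs of each subset of the nonzero entries and testing all cofacet determinants — then exhibits a $\{0,\pm1\}$-lift for each of these twenty-two pairs, which together with the reductions above proves the theorem.

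The main difficulty is the ``large'' regime rather than anything conceptual: the matroid argument only reaches complexes with few facets, so the weight of the proof falls on the finitely many PL~spheres with $\geq168$ facets. The delicate points there are checking that the $168$-block partition of the facets of $\cM$ is genuinely $GL(4,\Z_2)$-equivariant (so the reverse pigeonhole applies uniformly to every IDCM), ensuring that the enumeration performed by Algorithm~\ref{algorithm:168} is \emph{complete} — which rests on monotonicity of the facet count under wedging together with the bound $n\leq11$ — and then genuinely verifying, not merely asserting, that each of the twenty-two residual pairs admits a $\{0,\pm1\}$-lift.
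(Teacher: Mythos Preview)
Your proposal is correct and follows essentially the same route as the paper's own proof: the same reduction of non-injective quasi-IDCMs via the suspended-seed classification and the joining representative, the same binary-matroid pigeonhole argument for IDCMs with fewer than $168$ facets, and the same computer enumeration and $\{0,\pm1\}$-lift search for the residual twenty-two pairs. The only minor imprecision is your remark that Algorithm~\ref{algorithm:168} relies on ``monotonicity of the facet count under wedging''; in fact the algorithm prunes because projections of an IDCM are again IDCMs (Proposition~\ref{proposition: dual projection}), and terminates purely by the dimension bound $n\leq 11$.
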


The database containing the PL~spheres of Picard number~$4$ admitting an IDCM are available on the second author's Github repository:
\begin{center}
\url{https://github.com/Hyeontae1112/TWI}
\end{center}

\bibliographystyle{amsplain}
\bibliography{reference2024}
\end{document}